\theoremstyle{plain}
\newtheorem{thm}{Theorem}[section]
  \theoremstyle{definition}
  \newtheorem{defn}[thm]{Definition}
  \theoremstyle{remark}
  \newtheorem{rem}[thm]{Remark}
  \theoremstyle{plain}
  \newtheorem{lem}[thm]{Lemma}
 \theoremstyle{definition}
  \newtheorem{example}[thm]{Example}
  \theoremstyle{plain}
  \newtheorem{prop}[thm]{Proposition}
  \theoremstyle{plain}
  \newtheorem{cor}[thm]{Corollary}
\def\N{\mathbb{N}}
\def\No{{\mathbb{N}_0}}
\def\R{\mathbb{R}}
\def\Z{\mathbb{Z}}
\def\C{\mathbb{C}}
\def\Nn{{\mathbb{N}^n}}
\def\Non{{\mathbb{N}_0^n}}
\def\Rn{{\mathbb{R}^n}}
\def\Zn{{\mathbb{Z}^n}}
\def\Cn{\mathbb{C}^n}
\begin{document}

\baselineskip=17pt


\global\long\def\N{\mathbb{N}}
 \global\long\def\No{\mathbb{N}_{0}}
 \global\long\def\R{\mathbb{R}}
 \global\long\def\Z{\mathbb{Z}}
 \global\long\def\C{\mathbb{C}}
 \global\long\def\Nn{\mathbb{N}^{n}}
 \global\long\def\Non{\mathbb{N}_{0}^{n}}
 \global\long\def\Rn{\mathbb{R}^{n}}
 \global\long\def\Zn{\mathbb{Z}^{n}}
 \global\long\def\Cn{\mathbb{C}^{n}}
 \global\long\def\K{\mathbb{K}}

\title{On the Hausdorff dimension of continuous functions belonging to H\"older
and Besov spaces on fractal $d$-sets}

\author{Abel Carvalho\thanks{Centro I\&D Matemática e Aplicações, Universidade de Aveiro, 3810-193 Aveiro, Portugal, \texttt{abel.carvalho@ua.pt}}$\,$ and António Caetano\thanks{Departamento de Matemática, Universidade de Aveiro, 3810-193 Aveiro, Portugal, \texttt{acaetano@ua.pt} (corresponding author)}}
\date{}
\maketitle
\begin{abstract}
The Hausdorff dimension of the graphs of the functions in H\"older
and Besov spaces (in this case with integrability $p\geq1$) on fractal
$d$-sets is studied. Denoting by $s\in(0,1]$ the smoothness parameter, the
sharp upper bound $\min\{d+1-s,d/s\}$ is obtained. In particular, when passing from $d\geq s$ to $d<s$ there is a
change of behaviour from $d+1-s$ to $d/s$ which implies that even highly nonsmooth
functions defined on cubes in $\Rn$ have not so rough graphs when
restricted to, say, \emph{rarefied} fractals. 
\end{abstract}

\emph{MSC 2010:} 26A16, 26B35, 28A78, 28A80, 42C40, 46E35. 

\smallskip

\textbf{Keywords:} Hausdorff dimension; box counting dimension; fractals; $d$-sets;
continuous functions; Weierstrass function; H\"older spaces; Besov
spaces; wavelets.

\smallskip

\textbf{Acknowledgements:} Research partially supported by Funda\c
c\~ao para a Ci\^encia
e a Tecnologia (Portugal) through Centro de I\&D em Matemática e Aplicações (formerly Unidade de Investigação em Matem\'atica e Aplica\c c\~oes) of the University of Aveiro.

\section{Introduction}

This paper deals with the relationship between dimensions of sets
and of the graphs of real continuous functions defined on those sets
and having some prescribed smoothness. First studies in this direction
are reported in \cite[Chapter 10, § 7]{Kah93}, where $\min\{d+1-s,d/s\}$ is shown to
be an upper bound for the Hausdorff dimension of the graphs of H\"older continuous
functions with H\"older exponent $s\in(0,1)$ and defined on compact subsets of
$\R^n$ with Hausdorff dimension equal to $d$. 

That the above bound is sharp comes out from \cite[Chapter 18, § 7]{Kah93}

A corresponding result involving Besov spaces, on cubes on $\R^n$, with smoothness parameter
$s\in(0,1]$ and integrability parameter $p\in(0,\infty]$
was established by F. Roueff \cite[Theorem 4.8, p. 77]{Roueff-tese},
where it turned out that for $p\geq1$ the sharp upper bound is
$n+1-s$. On the other hand, if upper box dimension is used instead, then the complete picture was settled by A. Carvalho \cite{Abel05a}, with A. Deliu and B. Jawerth \cite{DeJa92} as forerunners (though the latter paper contains a mistake noticed by A. Kamont and B. Wolnik \cite{KaWo99} as well as, independently, by A. Carvalho \cite{Abel05a}).

Our aim here is to study the corresponding problem for Besov spaces when the underlying
domains for the functions are allowed to have themselves non-integer dimensions (as was the case in the mentioned results involving H\"older spaces).
More precisely, we consider $d$-sets in $\Rn$ (with $0<d\leq n)$
for our underlying domains and determine the sharp upper bound for
the Hausdorff dimension of the graphs of continuous functions defined
on such $d$-sets and belonging to Besov spaces (with integrability parameter $p\geq1$), with a prescribed smoothness parameter $s\in(0,1]$.
As in the case of H\"older spaces, under the assumption $s\leq d$ we obtain the behaviour
$d+1-s$, whereas when $s>d$ the correct sharp upper bound is $d/s$. 

One of the qualitative implications of this change of behaviour for
small values of $d$ is the following (we illustrate it in the case of H\"older continuous functions): 

Given $s\in(0,1]$, $n\in\N$ and a positive integer $d\in(0,n]$,
it is possible to find an H\"older continuous functions defined on
a cube in $\Rn$ and with H\"older exponent $s$ whose restriction
to some $d$-set has graph with roughness (as measured by the Hausdorff
dimension) as close to $d+1-s$ as one wishes. In particular, if we
start with an $s$ close to zero, our restriction to a $d$-set might
give us a function with a graph having dimension close to $d+1$.
This is also true for $d\in(0,1)$ as long as $d\geq s$, so in such
cases the graphs of our functions, even when these are restricted
to $d$-sets with small $d$, might gain almost one extra unit of
roughness when compared with the domain. Consequently we might be
near the right endpoint of the interval obtained in Lemma \ref{lem:minmax}.
However, when $d$ is allowed to become less than $s$, the dimension
of the corresponding graphs cannot overcome $d/s$, so letting $d$
tend to zero will result in graphs with dimensions also approaching
zero. In other words, in such cases we are definitely near the left
endpoint of the interval obtained in Lemma \ref{lem:minmax}.

So we show that the same type of phenomenon occurs in the setting of Besov spaces.
The proof of the upper bound $d+1-s$ is inspired in the deep proof
given by Roueff in the case of having $n$-cubes for domains. On the other hand, the proof of
the sharpness owes a lot to the ideas used by Hunt \cite{Hunt98}, where a combination between randomness and the potential theoretic method for the estimation of Hausdorff dimensions has been used.

For the sake of completeness and to help having the more complex results in perspective, we revisit also the simpler setting of H\"older spaces and give shorter proofs than in the setting of Besov spaces.
For the reader only interested in the result involving H\"older continuous
functions, some material can be skipped: Lemma \ref{lem:aggregation};
everything after Example \ref{exa:holder} within subsection \ref{sub:Function-spaces};
Theorem \ref{thm:Besov up est} (and its long proof, of course); everything
after the first paragraph in the proof of Corollary \ref{cor:Corollary}.
This material has specifically to do with the proof involving Besov
spaces.

\section{Preliminaries}

In this section we give the necessary definitions concerning the dimensions,
sets and function spaces to be considered. We also recall some results
and establish others that will be needed for the main proofs in the
following section. However, we start by listing some notation which
applies everywhere in this paper:

The number $n$ is always considered in $\N$. The closed ball in
$\Rn$ with center $a$ and radius $r$ is denoted by $B_{r}(a)$
and a cartesian product of $n$ intervals of equal length is said
to be an $n$-cube. The notation $|\cdot|$ stands either for the
Euclidean norm in $\Rn$ or for the sum of coordinates of a multi-index
in $\Non$ and $\lambda_{n}$ denotes the Lebesgue measure in $\R^{n}$. 

The shorthand ${\rm diam}$ is used for the diameter of a set and
${\rm osc}_{I}f$ stands for the oscillation of the function $f$
on the set $I$ (that is, the difference $\sup_{I}f-\inf_{I}f$),
whereas $\Gamma(f)$ denotes the graph of the function $f$. The usual
Schwartz space of functions on $\Rn$ is denoted by $\mathcal{S}(\R^{n})$,
its dual $\mathcal{S}'(\R^{n})$ being the usual space of tempered
distributions.

On the relation side, $a\lesssim b$ (or $b\gtrsim a$) applies to
nonnegative quantities $a$ and $b$ and means that there exists a
positive constant $c$ such that $a\leq cb$, whereas $a\approx b$
means that $a\lesssim b$ and $b\lesssim a$ both hold. On the other
hand, $A\subset B$ applies to sets $A$ and $B$ and is the usual
inclusion relation (allowing also for the equality of sets). We use
$A\hookrightarrow B$ when continuity of the embedding is also meant,
for the topologies considered in the sets $A$ and $B$.

\subsection{Dimensions, sets and functions}

We start with the dimensions, after recalling some notions related
with measures. These definitions and results are taken from \cite{Fal90},
to which we refer for details.
\begin{defn}
(a) A \emph{measure} on $\R^{n}$ is a function $\mu:\mathcal{P}(\R^{n})\to[0,\infty]$,
defined over all subsets of $\R^{n}$, which satisfies the following
conditions: (i) $\mu(\emptyset)=0$; (ii) $\mu(U_{1})\leq\mu(U_{2})$
if $U_{1}\subset U_{2}$; (iii) $\mu(\cup_{k\in\N}U_{k})\leq\sum_{k\in\N}\mu(U_{k})$,
with equality in the case when $\{U_{k}:\, k\in\N\}$ is a collection
of pairwise disjoint Borel sets.

(b) A \emph{mass distribution} on $\R^{n}$ is a measure $\mu$ on
$\R^{n}$ such that $0<\mu(\R^{n})<\infty$.

(c) The \emph{support} of a measure $\mu$ is the smallest closed
set $A$ such that $\mu(\R^{n}\setminus A)=0$.
\end{defn}
\vspace{0mm}

\begin{defn}
Let $d\geq0$, $\delta>0$ and $\emptyset\not=E\subset\R^{n}$.

(a) We define $\mathcal{H}_{\delta}^{d}(E):=\inf\{\sum_{k\in\N}{\rm diam}\,(U_{k})^{d}:\,{\rm diam}\,(U_{k})\leq\delta\,\mbox{ and }\, E\subset\cup_{k\in\N}U_{k}\}$.

(b) The quantity $\mathcal{H}_{\delta}^{d}(E)$ increases when $\delta$
decreases. Hence the following definition, of the so-called $d$-dimensional
\emph{Hausdorff measure}, makes sense: $\mathcal{H}^{d}(E):=\lim_{\delta\to0^{+}}\mathcal{H}_{\delta}^{d}(E)$.
And it is, indeed, a measure according to the preceding definition.

(c) There exists a critical value $d_{E}\geq0$ such that $\mathcal{H}^{d}(E)=\infty$
for $d<d_{E}$ and $\mathcal{H}^{d}(E)=0$ for $d>d_{E}$. We define
the \emph{Hausdorff dimension} of $E$ as $\dim_{H}E\,:=d_{E}$.\end{defn}
\begin{rem}
In order to get to the definiton of Hausdorff dimension of the set
$E$ we can restrict consideration to sets $U_{k}$ which are $n$-cubes,
with sides parallel to the axes, of side length $2^{-j}$, with $j\in\N,$
and centered at points of the type $2^{-j}m$, with $m\in\Z^{n}$.
This can lead to different values for measures, but will produce the
same Hausdorff dimension as in the definition above. We shall take
advantage of this later on.
\end{rem}
We collect in the following remark some properties concerning Hausdorff
dimension that we shall also need:
\begin{rem}
\label{rem:4 prop Hausd dim}(a) $\dim_{H}\R^{n}=n$. More generally,
the same is true for the dimension of any open subset of $\R^{n}$.

(b) If $E\subset F$, then $\dim_{H}E\leq\dim_{H}F$.

(c) The (Hausdorff) dimension does not increase under a Lipschitzian
transformation of sets.

(d) Consider a closed subset $E$ of $\R^{n}$and a mass distribution
$\mu$ supported on $E$. Let $t>0$ be such that$\int_{\R^{n}}\int_{\R^{n}}\frac{1}{|x-y|^{t}}\: d\mu(x)\, d\mu(y)<\infty.$
Then $\dim_{H}E\geq t$.\end{rem}
\begin{defn}
\label{def:upper box dim}Let $E$ be a non-empty bounded subset of
$\R^{n}$. The \emph{upper box counting dimension} of $E$ is the
number\[
\overline{\dim}_{B}E:=\limsup_{j\to\infty}\frac{\log_{2}N_{j}(E)}{j},\]
where $N_{j}(E)$ stands for the number of $n$-cubes, with sides
parallel to the axes, of side length $2^{-j}$ and centered at points
of the type $2^{-j}m$, with $m\in\Z^{n}$, which intersect $E$.
\end{defn}
Again, we collect in a remark some properties which will be of
use later on:
\begin{rem}
\label{rem:2 prop dim}(a) Let $E$ be a non-empty bounded subset
of $\R^{n}$. Then $\dim_{H}E\leq\overline{\dim}_{B}E$.

(b) Let $\emptyset\not=E\subset\R^{n}$, $\emptyset\not=F\subset\R^{m}$,
with $E$ bounded. Then $\dim_{H}(E\times F)\leq\overline{\dim}_{B}E+\dim_{H}F$.

Next we define the sets which we want to consider:\end{rem}
\begin{defn}
\label{def:d-set} Let $0<d\leq n$. A $d$-set in $\R^{n}$ is a
(compact) subset $K$ of $\R^{n}$ which is the support of a mass
distribution $\mu$ on $\R^{n}$ satisfying the following condition:\[
\exists c_{1},c_{2}>0:\,\forall r\in(0,1],\,\forall x\in K,\; c_{1}r^{d}\leq\mu(B_{r}(x))\leq c_{2}r^{d}.\]
\end{defn}
\begin{rem}
\label{rem:dim d}Here we are not following \cite{Fal90}, but rather
\cite{JW84}, just with the difference that our $d$-sets are necessarily
compact, because we are assuming that our associated measure $\mu$
is actually a mass distribution. Otherwise we can follow \cite{JW84}
and conclude that we can take for $\mu$ the restriction to $K$ of
the $d$-dimensional Hausdorff measure and that a $d$-set has always
Hausdorff dimension equal to $d$.
\end{rem}
\vspace{0mm}

\begin{rem}
\label{rem:dsetcover} Any $d$-set $K$ in $\Rn$, with $d\in(0,n]$,
intersects $\approx r^{d}$ cubes of any given regular tessellation
of $\Rn$ by cubes of sides parallel to the axes and side length $r^{-1}$,
for any given $r\geq r_{0}>0$, $r_{0}$ fixed, with equivalence constants
independent of $r$. For a proof, adapt to our setting the arguments
in \cite[Lemma 2.1.12]{Mou01b}.
\end{rem}
As we shall be interested, later on, to study dimensions of graphs
of functions, we consider here a couple of results involving these
special sets. We start with a result which gives already some restrictions
for the possible values that the Hausdorff dimension of such sets
can have.
\begin{lem}
\label{lem:minmax}If $f$ is a real function defined on a $d$-set
$K$, then $\dim_{H}\Gamma(f)\in[d,d+1]$.\end{lem}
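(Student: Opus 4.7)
The plan is to prove the two bounds separately, with both reducing quickly to tools already assembled in the preliminaries.

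For the lower bound $\dim_H\Gamma(f)\geq d$, I would use the projection $\pi:\Gamma(f)\to K$, $(x,f(x))\mapsto x$. Since $|x-y|\leq|(x-y,f(x)-f(y))|$, this map is $1$-Lipschitz, so by Remark \ref{rem:4 prop Hausd dim}(c) we get $\dim_H K\leq\dim_H\Gamma(f)$. But $K$ is a $d$-set, so Remark \ref{rem:dim d} gives $\dim_H K=d$, completing this half.

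For the upper bound $\dim_H\Gamma(f)\leq d+1$, I would exploit the inclusion $\Gamma(f)\subset K\times f(K)$, combined with monotonicity (Remark \ref{rem:4 prop Hausd dim}(b)) and the product inequality (Remark \ref{rem:2 prop dim}(b)), to obtain
\[
\dim_H\Gamma(f)\leq\dim_H(K\times f(K))\leq\overline{\dim}_B K+\dim_H f(K).
\]
The first term on the right equals $d$: indeed $K$ is compact (hence bounded) and Remark \ref{rem:dsetcover} provides $N_j(K)\approx 2^{jd}$, which upon insertion in Definition \ref{def:upper box dim} yields $\overline{\dim}_B K=d$. For the second term, $f(K)\subset\R$, and by monotonicity together with $\dim_H\R=1$ from Remark \ref{rem:4 prop Hausd dim}(a) we have $\dim_H f(K)\leq 1$. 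Adding gives $d+1$.

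There is no genuine obstacle here; the only subtlety worth noting is that the argument must go through under no assumption on $f$ beyond being real-valued, in particular without boundedness, continuity, or measurability. Both halves above are robust in this respect: the Lipschitz projection argument needs nothing from $f$, and the product bound used for the upper estimate does not require $f(K)$ to be bounded (the Hausdorff dimension of any subset of $\R$ is automatically at most one).
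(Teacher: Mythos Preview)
Your proof is correct and follows essentially the same approach as the paper. The only cosmetic difference is that for the upper bound the paper uses the cruder inclusion $\Gamma(f)\subset K\times\R$ (and then $\dim_H\R=1$) rather than your $\Gamma(f)\subset K\times f(K)$, but the chain of inequalities and the cited tools (Remarks \ref{rem:4 prop Hausd dim}, \ref{rem:2 prop dim}(b), \ref{rem:dsetcover}, \ref{rem:dim d}, Definition \ref{def:upper box dim}) are identical.
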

\begin{proof}
(i) We prove first that $\dim_{H}\Gamma(f)\geq d$. Writing the elements
of $\Gamma(f)$ in the form $(x,t)$, with $x\in K\subset\R^{n}$
and $t=f(x)\in\R$, it is clear that $K=P\Gamma(f)$, where $P:\R^{n+1}\to\R^{n}$
is the projection defined by $P(x,t):=x$. As is easily seen, $P$
is a Lipschitzian transformation of sets, therefore, by Remarks \ref{rem:dim d}
and \ref{rem:4 prop Hausd dim}(c), $d=\dim_{H}K\leq\dim_{H}\Gamma(f)$.

(d) In order to show that, on the other hand, $\dim_{H}\Gamma(f)\leq d+1$,
just notice that Remarks \ref{rem:4 prop Hausd dim}(a),(b), \ref{rem:2 prop dim}(b),
\ref{rem:dsetcover} and Definition \ref{def:upper box dim} allow
us to write that \[
\dim_{H}\Gamma(f)\leq\dim_{H}K\times\R\leq\overline{\dim}_{B}K+1=d+1.\]

\end{proof}
Next we state and prove a \textit{seed} for the so-called\textit{
aggregation method} considered in \cite[p. 24, discussion after Remark 2.2]{Roueff-tese}:
\begin{lem}
\label{lem:aggregation}Let $k,l\in\No$, with $k<l$, and $h_{0}$,
$h_{1}$ be two bounded real functions defined on a bounded subset
$K$ of $\R^{n}$. Let $\mathcal{Q}_{j}$, with $j=k,l$, be finite
coverings of $K$ by $n$-cubes $Q_{j}$ with sides parallel to the
axes, of side length $2^{-j}$ and centered at points of the type
$2^{-j}m$, with $m\in\Z^{n}$. Assume that a covering of $\Gamma(h_{0})$
by $(n+1)$-cubes of side length at least $2^{-k}$ is given, and
such that \emph{over} each $Q_{k}$ each point between the \emph{levels}
$m_{Q_{k}}:=\inf_{Q_{k}\cap K}h_{0}$ and $M_{Q_{k}}:=\sup_{Q_{k}\cap K}h_{0}$
belongs to one of those $(n+1)$-cubes. Then the number of $(n+1)$-cubes
of side length $2^{-l}$ that one needs to add to the given covering
of $\Gamma(h_{0})$, in order to get a covering of $\Gamma(h_{0}+h_{1})$
by $(n+1)$-cubes of side length at least $2^{-l}$, and such that
\emph{over} each $Q_{l}$ each point between the \emph{levels} $m_{Q_{l}}:=\inf_{Q_{l}\cap K}h_{0}+h_{1}$
and $M_{Q_{l}}:=\sup_{Q_{l}\cap K}h_{0}+h_{1}$ belongs to one of
those $(n+1)$-cubes, is bounded above by\[
\sum_{Q_{l}\in\mathcal{Q}_{l}}(2^{l+1}\sup_{y\in Q_{l}}|h_{1}(y)|+2).\]
\end{lem}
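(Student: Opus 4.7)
The plan is to work cube-by-cube in the tessellation $\mathcal{Q}_l$, and for each $Q_l \in \mathcal{Q}_l$ meeting $K$, count the additional $(n+1)$-cubes of side $2^{-l}$ above $Q_l$ that are needed. Since $l>k$, each such $Q_l$ sits inside some $Q_k \in \mathcal{Q}_k$, and by hypothesis the given covering of $\Gamma(h_0)$ already contains the entire vertical slab $Q_k \times [m_{Q_k}, M_{Q_k}]$, and hence the sub-slab $Q_l \times [m_{Q_k}, M_{Q_k}]$. Thus only the heights in $[m_{Q_l}, M_{Q_l}] \setminus [m_{Q_k}, M_{Q_k}]$ (where $m_{Q_l}, M_{Q_l}$ now refer to $h_0+h_1$) still need to be covered above $Q_l$, and everything reduces to estimating the total length of this leftover set.

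To control that length, observe that $Q_l \cap K \subset Q_k \cap K$ gives $\inf_{Q_l \cap K} h_0 \geq m_{Q_k}$ and $\sup_{Q_l \cap K} h_0 \leq M_{Q_k}$, so the triangle inequality and $|h_1(y)| \leq \sup_{y' \in Q_l} |h_1(y')|$ for $y \in Q_l \cap K$ yield
\[
m_{Q_l} \geq m_{Q_k} - \sup_{y \in Q_l} |h_1(y)| \quad \text{and} \quad M_{Q_l} \leq M_{Q_k} + \sup_{y \in Q_l} |h_1(y)|.
\]
Consequently the still-uncovered heights above $Q_l$ form at most two intervals, one just below $m_{Q_k}$ and one just above $M_{Q_k}$, each of length at most $\sup_{y \in Q_l} |h_1(y)|$.

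A vertical segment of length $L$ above $Q_l$ can be blanketed by at most $\lceil 2^l L \rceil \leq 2^l L + 1$ closed $(n+1)$-cubes of side $2^{-l}$, so the two remaining segments together are handled by at most $2 \cdot 2^l \sup_{y \in Q_l} |h_1(y)| + 2 = 2^{l+1} \sup_{y \in Q_l} |h_1(y)| + 2$ new cubes per $Q_l$. Summing over $Q_l \in \mathcal{Q}_l$ produces the stated bound. The proof is, in essence, a clean counting exercise; the only mildly delicate points are keeping track of the two disjoint intervals above and below the already-covered slab and the ceiling function, which is precisely what forces the additive $+2$ inside the sum.
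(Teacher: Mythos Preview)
Your proof is correct and follows essentially the same approach as the paper's: work over each $Q_l$, note that the slab $Q_l\times[m_{Q_k},M_{Q_k}]$ is already covered by hypothesis, bound the overshoot of $h_0+h_1$ above $M_{Q_k}$ and below $m_{Q_k}$ by $\sup_{Q_l}|h_1|$, and then count the at most $2(2^l\sup_{Q_l}|h_1|+1)$ new cubes needed. The paper writes the key inequality as $m_{Q_k}+\inf_{Q_l}h_1\le h_0+h_1\le M_{Q_k}+\sup_{Q_l}h_1$, which is the same estimate you obtain via the triangle inequality.
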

\begin{proof}
Clearly what one needs is to cover the portion of $\Gamma(h_{0}+h_{1})$
\emph{over}\textit{\emph{ each $Q_{l}$ and then put all together.
Notice that, given one such $Q_{l}$, what one really needs to cover
is the portion of $\Gamma(h_{0}+h_{1})$ }}\textit{over}\textit{\emph{
$Q_{k}\cap Q_{l}$, for the $Q_{k}$containing $Q_{l}$. By hypothesis,
all the points of $\R^{n+1}$}}\textit{over}\textit{\emph{ $Q_{k}\cap Q_{l}$
between the }}\textit{levels}\textit{\emph{ $m_{Q_{k}}$ and $M_{Q_{k}}$
are already covered, so one only needs to ascertain which points of
$\Gamma(h_{0}+h_{1})$ over $Q_{k}\cap Q_{l}\,(=Q_{l})$ do not fall
within those }}\textit{levels}\textit{\emph{. Since \[
m_{Q_{k}}-\sup_{y\in Q_{l}}(-h_{1}(y))=m_{Q_{k}}+\inf_{y\in Q_{l}}h_{1}(y)\leq h_{0}(x)+h_{1}(x)\leq M_{Q_{k}}+\sup_{y\in Q_{l}}h_{1}(y)\]
for all $x\in Q_{l}\cap K$, then it is enough to add, }}\textit{over}\textit{\emph{
$Q_{l}$, to the previous cover, a number of $(n+1)$-cubes of side
length $2^{-l}$ in a quantity not exceeding $2\,(2^{l}\sup_{y\in Q_{l}}|h_{1}(y)|+1)$.}}
\end{proof}
We shall also need a couple of technical lemmas which we state and
prove next:
\begin{lem}
\label{lem:preliminar}Let $\beta>\alpha>0$ be two fixed numbers
and $\zeta:(0,\beta)\to\R^{+}$ be a fixed integrable function. Then
\[
\int_{0}^{\alpha}\zeta(r)f(r)\, dr\approx\int_{0}^{\beta}\zeta(r)f(r)\, dr\]
for all non-increasing functions $f:(0,\beta)\to\R^{+}$.\end{lem}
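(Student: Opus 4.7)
The direction $\int_0^\alpha \zeta f \leq \int_0^\beta \zeta f$ is immediate, since both $\zeta$ and $f$ are nonnegative and $(0,\alpha)\subset(0,\beta)$. So the work is all in the reverse estimate $\int_0^\beta \zeta(r)f(r)\,dr \lesssim \int_0^\alpha \zeta(r)f(r)\,dr$, with a constant allowed to depend on $\alpha,\beta,\zeta$ but \emph{not} on $f$.

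My plan is to split the integral as
\[
\int_0^\beta \zeta(r)f(r)\,dr \;=\; \int_0^\alpha \zeta(r)f(r)\,dr \;+\; \int_\alpha^\beta \zeta(r)f(r)\,dr,
\]
and estimate the tail $\int_\alpha^\beta \zeta f$ by a multiple of $\int_0^\alpha \zeta f$. The mechanism is monotonicity: for any $r\in(\alpha,\beta)$ and any $s\in(0,\alpha)$ one has $s<r$ and hence $f(r)\leq f(s)$, because $f$ is non-increasing. Multiplying by $\zeta(s)>0$ and integrating over $s\in(0,\alpha)$ yields
\[
f(r)\int_0^\alpha \zeta(s)\,ds \;\leq\; \int_0^\alpha \zeta(s)f(s)\,ds,
\]
so that, setting $A:=\int_0^\alpha \zeta(s)\,ds$ (which is strictly positive since $\zeta$ takes values in $\R^+$) and $B:=\int_\alpha^\beta \zeta(r)\,dr$ (finite by integrability of $\zeta$),
\[
f(r)\;\leq\;\frac{1}{A}\int_0^\alpha \zeta(s)f(s)\,ds, \qquad r\in(\alpha,\beta).
\]
Multiplying by $\zeta(r)$ and integrating over $r\in(\alpha,\beta)$ gives $\int_\alpha^\beta \zeta f \leq (B/A)\int_0^\alpha \zeta f$, and therefore
\[
\int_0^\beta \zeta(r)f(r)\,dr \;\leq\; \Bigl(1+\tfrac{B}{A}\Bigr)\int_0^\alpha \zeta(r)f(r)\,dr.
\]

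There is really no obstacle here: the only point that deserves a brief comment is the positivity of $A$, which follows from the assumption that $\zeta$ is strictly positive on $(0,\beta)$ (so its integral over the subinterval $(0,\alpha)$ of positive length cannot vanish). The equivalence constants $1$ and $1+B/A$ depend only on the fixed data $\alpha$, $\beta$, $\zeta$, and are uniform in the class of non-increasing positive functions $f$ on $(0,\beta)$, as required.
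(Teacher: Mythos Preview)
Your proof is correct and follows essentially the same approach as the paper: bound the tail $\int_\alpha^\beta \zeta f$ by $(B/A)\int_0^\alpha \zeta f$ via monotonicity of $f$. The only cosmetic difference is that the paper pivots through the single value $f(\alpha)$ (using $f(r)\leq f(\alpha)$ on $(\alpha,\beta)$ and $f(\alpha)\leq f(s)$ on $(0,\alpha)$), whereas you integrate the pointwise inequality $f(r)\leq f(s)$ directly; both routes yield the same constant $1+B/A$.
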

\begin{proof}
$\int_{\alpha}^{\beta}\zeta(r)f(r)\, dr\leq\int_{\alpha}^{\beta}\zeta(r)f(\alpha)\, dr=c\int_{0}^{\alpha}\zeta(r)f(\alpha)\, dr\leq\int_{0}^{\alpha}\zeta(r)f(r)\, dr$.\end{proof}
\begin{lem}
Consider $0<d\leq n$ and $K$ a $d$-set. Assume that $\mu_{K}$
is a mass distribution supported on $K$ according to Definition \ref{def:d-set}.
Then\[
\int_{\Rn}f(|x-y|)\, d\mu_{K}(x)\approx\int_{0}^{{\rm diam}\, K}r^{d-1}f(r)\, dr\]
for all $y\in K$ and all non-increasing and continuous functions
$f:\R^{+}\to\R^{+}$.\end{lem}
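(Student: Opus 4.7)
The plan is to reduce both sides to the same Stieltjes-type expression via a Fubini argument, after representing the non-increasing continuous function $f$ as an antiderivative of a positive Borel measure. Let $R := {\rm diam}\, K$, which is finite since $K$ is compact.

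First I would extend the ball estimate $\mu_K(B_r(y)) \approx r^d$, which Definition \ref{def:d-set} supplies only for $r \in (0, 1]$, to the full range $r \in (0, R]$. For $r \in [1, R]$ one has $c_1 \leq \mu_K(B_1(y)) \leq \mu_K(B_r(y)) \leq \mu_K(K)$, and $r^d$ is likewise pinched between two positive constants on that compact interval, so the equivalence persists, with constants depending on $R$ and the total mass of $\mu_K$.

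Next, since $f$ is non-increasing and continuous on $\R^+$, the map $r \mapsto f(R) - f(r)$ is non-decreasing and continuous on $(0, R]$ and is therefore the distribution function of a positive Borel measure $\sigma$ on $(0, R]$, so that $f(r) = f(R) + \sigma((r, R])$. Substituting and applying Fubini on each side gives
\[
\int_{\R^n} f(|x-y|) \, d\mu_K(x) = f(R) \, \mu_K(K) + \int_{(0, R]} \mu_K(\{x \in K : |x-y| < s\}) \, d\sigma(s)
\]
and
\[
\int_0^R r^{d-1} f(r) \, dr = \frac{R^d}{d} f(R) + \frac{1}{d} \int_{(0, R]} s^d \, d\sigma(s).
\]
Since $\mu_K(\{x \in K : |x - y| < s\}) \approx s^d$ uniformly in $y \in K$ and $s \in (0, R]$ (sandwiching $\{|x-y|<s\}$ between the closed balls $B_{s/2}(y)$ and $B_s(y)$ and applying the extended estimate), and $\mu_K(K) \approx R^d$ is exactly the same estimate at $s = R$, the two displayed right-hand sides are equivalent and the claim follows.

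The main delicate point is the Stieltjes representation of $f$ when it is only continuous (hence possibly not differentiable) and perhaps unbounded near $0$. The Lebesgue–Stieltjes measure associated with the monotone function $-f$ absorbs the first issue, and the unbounded case is harmless because the finiteness of $\int_0^R r^{d-1} f(r) \, dr$ is equivalent, by the second Fubini identity above, to the finiteness of $\int_{(0, R]} s^d \, d\sigma(s)$; so either both members of the asserted equivalence are finite and comparable, or both are infinite.
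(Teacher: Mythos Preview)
Your proof is correct and takes a genuinely different route from the paper's. The paper argues directly by decomposing $K$ into dyadic annuli $C_l$ of width $\lambda/N$ around $y$, bounding the integral over each annulus using the monotonicity of $f$ and the ball estimates $c_1 r^d \le \mu_K(B_r(y)) \le c_2 r^d$, and then passing to the limit $N\to\infty$ (via uniform continuity of $f$ on compact subintervals) followed by $\lambda\to R$; finally it invokes Lemma~\ref{lem:preliminar} to replace the auxiliary radius $R$ by ${\rm diam}\,K$. Your argument instead writes $f(r)=f(R)+\sigma((r,R])$ for the Lebesgue--Stieltjes measure $\sigma$ associated with $-f$ and applies Tonelli, reducing everything to the single pointwise comparison $\mu_K(\{|x-y|<s\})\approx s^d$. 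This layer-cake approach is cleaner: it avoids the annular Riemann-sum bookkeeping, does not need the uniform-continuity step, dispenses with Lemma~\ref{lem:preliminar} altogether, and handles the possibly-infinite case transparently. The paper's approach, on the other hand, is entirely elementary and does not require the reader to be comfortable with the Stieltjes representation of a monotone function that may blow up at~$0$.
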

\begin{proof}
Let $R\in(0,\infty)$ be such that $K\subset B_{R}(y)$, with $R$
chosen independently of $y$ (this is possible, due to the boundedness
of $K$). By the definition of $d$-set, there exists $c_{1},c_{2}>0$
(also independent of $y$) such that \[
c_{1}r^{d}\leq\mu_{K}(B_{r}(y))\leq c_{2}r^{d},\qquad\mbox{for all }r\in(0,R].\]

For any $\lambda\in(0,R)$, consider the annulus \[
C^{(\lambda)}:=\{x\in\Rn:\; R-\lambda<|x-y|\leq R\}\]
and, for any $N\in\N$, the partition $\{C_{l}:\, l=1,\ldots,N\}$
of this annulus, where\[
C_{l}:=\{x\in\Rn:\, R-\frac{l\lambda}{N}<|x-y|\leq R-\frac{(l-1)\lambda}{N}\},\qquad\mbox{for }l=1,\ldots,N.\]
We have the following estimations:\begin{eqnarray*}
\int_{C^{(\lambda)}}f(|x-y|)\, d\mu_{K}(x) & \leq & \sum_{l=1}^{N}\mu_{K}(C_{l})f(R-\frac{l\lambda}{N})\\
 & = & \varepsilon_{N}+\sum_{l=1}^{N}\mu_{K}(C_{l})f(R-\frac{(l-1)\lambda}{N})\\
 & \leq & \varepsilon_{N}+\sum_{l=1}^{N}(c_{2}r_{l-1}^{d}-c_{2}r_{l}^{d})f(r_{l-1})\\
 & \leq & \varepsilon_{N}+\int_{(c_{1}/c_{2})^{1/d}(R-\lambda)}^{R}\frac{d(c_{2}r^{d})}{dr}f(r)\, dr\\
 & = & \varepsilon_{N}+c_{2}d\int_{(c_{1}/c_{2})^{1/d}(R-\lambda)}^{R}r^{d-1}f(r)\, dr\end{eqnarray*}
 where $r_{l}:=c_{2}^{-1/d}\mu_{K}(B_{R-l\lambda/N}(y))^{1/d}$, so
$r_{l}\leq R-\frac{l\lambda}{N}$, $l=0,\ldots,N$. Furthermore, $\varepsilon_{N}$
equals\begin{eqnarray*}
\lefteqn{\sum_{l=1}^{N}\mu_{K}(C_{l})(f(R-\frac{l\lambda}{N})-f(R-\frac{(l-1)\lambda}{N}))}\\
 & \leq & \mu_{K}(C^{(\lambda)})\max_{l=1,\ldots,N}|f(R-\frac{l\lambda}{N})-f(R-\frac{(l-1)\lambda}{N})|,\end{eqnarray*}
from which follows (using the uniform continuity of $f$ in $[R-\lambda,R]$)
that $\lim_{N\to\infty}\varepsilon_{N}=0$. In this way we obtain\begin{equation}
\int_{C^{(\lambda)}}f(|x-y|)\, d\mu_{K}(x)\leq c_{2}d\int_{(c_{1}/c_{2})^{1/d}(R-\lambda)}^{R}r^{d-1}f(r)\, dr.\label{eq:c2d}\end{equation}
And, by similar calculations,\begin{equation}
\int_{C^{(\lambda)}}f(|x-y|)\, d\mu_{K}(x)\geq c_{1}d\int_{(c_{2}/c_{1})^{1/d}(R-\lambda)}^{R}r^{d-1}f(r)\, dr,\label{eq:c1d}\end{equation}
as long as we only consider values of $\lambda\in(0,R)$ close enough
to $R$, so that $(c_{2}/c_{1})^{1/d}(R-\lambda)<R$. Letting now
$\lambda$ tend to $R$ in \eqref{eq:c2d} and \eqref{eq:c1d}, we
get (using also the fact that $\mu_{K}(\{0\})=0)$\[
c_{1}d\int_{0}^{R}r^{d-1}f(r)\, dr\leq\int_{\Rn}f(|x-y|)\, d\mu_{K}(x)\leq c_{2}d\int_{0}^{R}r^{d-1}f(r)\, dr.\]

The required result now follows by applying Lemma \ref{lem:preliminar}.\end{proof}
\begin{example}
\label{exa:par=0000EAnteses do lem0.2} As an interesting application
of the preceding result, which will be useful to us later on, we have\[
\int_{K}\frac{1}{|x-y|^{u}}\, d\mu_{K}(x)\approx\int_{0}^{{\rm diam}\, K}r^{d-u-1}\, dr,\]
with equivalence constants independent of $y\in K$ and $u\geq0$.
\end{example}

\subsection{Function spaces\label{sub:Function-spaces}}
\begin{defn}
For $s\in(0,1]$ and $\emptyset\not=K\subset\Rn$, define $\mathcal{C}^{s}(K):=\{f:K\to\R\,:\;\exists c>0:\forall x,y\in K,\,|f(x)-f(y)|\leq c\,|x-y|^{s}\}$.
In particular, all functions in $\mathcal{C}^{s}(K)$ are continuous
(of course, considering in $K$ the metric inherited from the sorrounding
$\Rn$). We shall call $\mathcal{C}^{s}(K)$ the set of the (real)
H\"older continuous functions (over $K$) of exponent $s$. On the
other hand, given $r\in\N$, we denote by $C^{r}(\Rn)$ the set of
all complex-valued functions defined on $\Rn$ such that the function
itself and all its derivatives up to (and including) the order $r$
are bounded and uniformly continuous.
\end{defn}
The following is a non-trivial example of a function in $\mathcal{C}^{s}(K)$.
Its proof follows from an easy adaptation of a corresponding result
in \cite[p. 796]{Hunt98}.
\begin{example}
\label{exa:holder}$x\mapsto W_{s,\theta}(x):=\sum_{i=1}^{n}\sum_{j=0}^{\infty}\rho^{-js}\cos(\rho^{j}x_{i}+\theta_{ij})$
is H\"older continuous of exponent $s\in(0,1)$ on any bounded subset
of $\R^{n}$, with H\"older constant independent of $\theta$.
\end{example}
The following definition (of Daubechies wavelets) includes an existence
assertion. For details, we refer to \cite[section 3.1]{Tri06}.
\begin{defn}
\label{def:wavelets} Let $r\in\N$. Define $L_{0}:=1$ and $L:=L_{j}:=2^{n}-1$
if $j\in\N$. There exist compactly supported real functions $\psi_{0}\in C^{r}(\Rn)$
and $\psi^{l}\in C^{r}(\Rn)$, $l=1,\ldots,L$\begin{equation}
\mbox{( with }\,\int_{\Rn}x^{\alpha}\psi^{l}(x)\, dx=0,\ \alpha\in\Non,\ |\alpha|\leq r\mbox{ ),}\label{eq:moments}\end{equation}
such that $\{\Psi_{jm}^{l}:\, j\in\No,\,1\leq l\leq L_{j},\, m\in\Zn\}$
is an orthonormal basis in $L_{2}(\Rn)$, where, by definition,\[
\Psi_{jm}^{l}(x):=\begin{cases}
\psi_{0}(x-m) & \mbox{if }\, j=0,\, l=1,\, m\in\Zn\\
2^{\frac{j-1}{2}n}\psi^{l}(2^{j-1}x-m) & \mbox{if }\, j\in\N,\,1\leq l\leq L,\, m\in\Zn\end{cases}.\]

\end{defn}
The Besov spaces in the following definition are the usual ones (up
to equivalent quasi-norms), defined by Fourier-analytical tools (a
definition along this line can be seen in \cite[section 1.3]{Tri06},
for example). From this point of view the definition which follows
is actually a theorem: for details, see \cite[Theorem 3.5 and footnote in p. 156]{Tri06}.
\begin{defn}
\label{def:Besov-spaces} Let $0<p,q\leq\infty$, $s\in\R$ and $r$
be a natural number such that $r>\max\{s,n(1/p-1)_{+}-s\}$. The Besov
space $B_{pq}^{s}(\Rn)$ is the set of all sums \begin{equation}
f:=\sum_{j,l,m}\lambda_{jm}^{l}2^{-jn/2}\Psi_{jm}^{l}=\sum_{j=0}^{\infty}\sum_{l=1}^{L_{j}}\sum_{m\in\Zn}\lambda_{jm}^{l}2^{-jn/2}\Psi_{jm}^{l}\label{eq:series}\end{equation}
(convergence --- actually, unconditional convergence --- in $\mathcal{S}'(\Rn)$),
for all given sequences $\{\lambda_{jm}^{l}\in\C:\, j\in\No,\, l=1,\ldots,L_{j},\, m\in\Zn\}$
such that\begin{equation}
\left(\sum_{m\in\Zn}|\lambda_{0m}^{1}|^{p}\right)^{1/p}+\sum_{l=1}^{L}\left(\sum_{j=1}^{\infty}2^{j(s-n/p)q}\left(\sum_{m\in\Zn}|\lambda_{jm}^{l}|^{p}\right)^{q/p}\right)^{1/q}\label{eq:quasi-norm}\end{equation}
(with the usual modifications if $p=\infty$ or $q=\infty$) is finite.
It turns out that \eqref{eq:quasi-norm} defines a quasi-norm in $B_{pq}^{s}(\Rn)$
which makes this a complete space.\end{defn}
\begin{rem}
\label{rem:unif conv series}(a) The representation \eqref{eq:series}
is unique, that is, it is uniquely determined by the limit $f$, namely
the coefficients are determined by the formul\ae \[
\lambda_{jm}^{l}=2^{jn/2}(f,\Psi_{jm}^{l}),\quad j\in\No,\, l=1,\ldots,L_{j},\, m\in\Zn,\]
where $(\cdot,\cdot)$, though standing for the inner product in $L_{2}(\Rn)$
when applied to functions in such a space, must in general be understood
in the sense of the dual pairing $\mathcal{S}(\Rn)-\mathcal{S}'(\Rn)$
--- see \cite[section 3.1]{Tri06} for details. As a consequence,
when $f$ is compactly supported, then, given any $j\in\No$, only
finitely many coefficients $\lambda_{jm}^{l}$ are non-zero. 

(b) Arguing as in \cite[p. 21]{Roueff-tese}, we can say that the
convergence in \eqref{eq:series} is even uniform in the support of
$f$ whenever this support is compact and $f$ is a continuous function.\end{rem}
\begin{defn}
Consider $0<d\leq n$ and $K$ a $d$-set with associated mass distribution
$\mu$ according to Definition \ref{def:d-set}. For $0<p<\infty,$
we define the Lebesgue space $L_{p}(K)$ as the set of all $\mu$-measurable
functions $f:K\to\C$ for which the quasi-norm given by \[
\|f\|_{L_{p}(K)}:=\left(\int_{\Rn}|f(x)|^{p}\, d\mu(x)\right)^{1/p}\]
is finite.
\end{defn}
\vspace{0mm}

\begin{defn}
\label{def:trace}Consider $0<d\leq n$ and $K$ a $d$-set. Let $0<p,q<\infty$.
Assuming that there exists $c>0$ such that \begin{equation}
\|\varphi|_{K}\|_{L_{p}(K)}\leq c\,\|\varphi\|_{B_{pq}^{s}(\R^{n})},\qquad\varphi\in\mathcal{S}(\R^{n}),\label{eq:phitrace}\end{equation}
 the trace of $f\in B_{pq}^{s}(\R^{n})$ on $K$ is defined by $tr_{K}\, f\,:=\lim_{j\to\infty}\varphi_{j}|_{K}$
in $L_{p}(K)$, where $(\varphi_{j})_{j}\subset\mathcal{S}(\R^{n})$
is any sequence converging to $f$ in $B_{pq}^{s}(\R^{n})$.
\end{defn}
This definition is justified by the completeness of $L_{p}(K)$ and
by the fact that the restrictions on $p,q$ guarantee that the Schwartz
space is dense in the Besov spaces under consideration. That the definition
does not depend on the particular approaching sequence $(\varphi_{j})_{j}$
is a consequence of \eqref{eq:phitrace}.

By \cite[Theorem 18.6 and Comment 18.7]{Tri02}, which holds for $d=n$
too (cf. also \cite[Theorem 3.3.1(i)]{Bri-tese}), one knows that
the assumption \eqref{eq:phitrace} holds true when $0<p<\infty$,
$0<q\leq\min\{1,p\}$ and $s=\frac{n-d}{p}$. Therefore the trace
of functions of Besov spaces on $K$ is well-defined for that range
of parameters.

Since $B_{pq}^{s}(\R^{n})\hookrightarrow B_{p,\min{1,p}}^{\frac{n-d}{p}}(\R^{n})$
whenever $s>\frac{n-d}{p}$, then the trace as defined above makes
sense for functions of the spaces $B_{pq}^{s}(\R^{n})$, for any $0<p<\infty$,
$0<q<\infty$ and $s>\frac{n-d}{p}$. Moreover, since the embedding
between the Besov spaces above also hold when $q=\infty$, then, though
Definition \eqref{def:trace} can no longer be applied, we define,
for any $f\in B_{p\infty}^{s}(\R^{n})$, with $0<p<\infty$ and $s>\frac{n-d}{p}$,
the $tr_{K}\, f$ by its trace when $f$ is viewed as an element of
$B_{p,\min{1,p}}^{\frac{n-d}{p}}(\R^{n})$.

Finally, in the case $f\in B_{\infty q}^{s}(\R^{n})$, with $0<q\leq\infty$
and $s>0$, we define $tr_{K}\, f\,:=f|_{K}$, the pointwise restriction,
since, for such range of parameters, the elements of those Besov spaces
are all (represented by) continuous functions (actually, we even have
$B_{\infty q}^{s}(\R^{n})\hookrightarrow C(\R^{n})$,  where the
latter space stands for the set of all complex-valued, bounded and
uniformly continuous functions on $\R^{n}$ endowed with the $\sup$
norm).
\begin{rem}
Another way of defining trace on $K$ is by starting to define it
by pointwise restriction when the function is continuous and, at least
in the case when $f$ is locally integrable on $\R^{n}$, define its
trace on $K$ by the pointwise restriction $\overline{f}|_{K}$, where
\[
\overline{f}(x)\,:=\lim_{r\to0}\frac{1}{\lambda_{n}(B_{r}(x))}\int_{B_{r}(x)}f(y)\, dy\]
for the values of $x$ where the limit exists. It is known (Lebesgue
differentiation theorem) that $\overline{f}=f$ a.e. (for locally
integrable functions $f$). As is easily seen, the identity $\overline{f}(x)=f$(x)
surely holds at any point $x$ where $f$ is continuous.

This is the approach followed by Jonsson and Wallin in \cite{JW84}
(see pp. 14-15), where they have shown (it is a particular case of
\cite[Theorem 2 in p. 142]{JW84}) that, for $0<d\leq n$, $1\leq p,q\leq\infty$ and
$s>\frac{n-d}{p}$, the map $f\mapsto\overline{f}|_{K}$ takes $B_{pq}^{s}(\R^{n})$
linearly and boundedly into $L_{p}(K)$.

Note now that, for this restriction of parameters, both $tr_{K}\, f$
and $\overline{f}|_{K}$ coincide with $f|_{K}$ when $f\in\mathcal{S}(\R^{n})$.
Therefore, at least when we further restrict $p$ and $q$ to be finite,
we get the identity $tr_{K}\, f=\overline{f}|_{K}$ in $L_{p}(K)$
for any $f\in B_{pq}^{s}(\R^{n})$, by a density argument. In the
case we still restrict $p$ to be finite but admit $q=\infty$, from
our definition above we see that $tr_{K}\, f$ is also the trace of
$f\in B_{p,\min{1,p}}^{\varepsilon+\frac{n-d}{p}}(\R^{n})$, for any
suitable small $\varepsilon>0$, where here the parameter ``$q$''
is again finite, so also $tr_{K}\, f=\overline{f}|_{K}$ in $L_{p}(K)$.
This identity even holds when $p=\infty$ is admitted, taking into
account that in that situation we are dealing with continuous functions.

Summing up, when $0<d\leq n$, $1\leq p,q\leq\infty$ and $s>\frac{n-d}{p}$
we have $tr_{K}\, f\,=\overline{f}|_{K}$.
\end{rem}
Although in \cite{JW84} both $p$ and $q$ are assumed to be greater
than or equal to 1, we can proceed with our comparative analysis between
$tr_{K}\, f$ and $\overline{f}|_{K}$ even for the remaining positive
values of $q$. In fact, given $0<d\leq n$, $1\leq p\leq\infty$,
$0<q<1$ and $s>\frac{n-d}{p}$, and due to the embedding $B_{pq}^{s}(\R^{n})\hookrightarrow B_{p,\min{1,p}}^{\varepsilon+\frac{n-d}{p}}(\R^{n})$
(which holds for any suitable small $\varepsilon>0$), we see, from
what was mentioned before, that the trace of $f\in B_{pq}^{s}(\R^{n})$
can be seen as the trace of $f$ as an element of $B_{p,\min{1,p}}^{\varepsilon+\frac{n-d}{p}}(\R^{n})$;
since in the latter space the ``$q$'' parameter is in the range $[1,\infty]$
(actually, it is 1), then we already know that $tr_{K}\, f\,=\overline{f}|_{K}$
here too.

As a consequence we have also the following remark, which will be
useful later on:
\begin{rem}
\label{rem:continuoustrace}Consider $0<d\leq n$ and $K$ a $d$-set.
If $f$ is a continuous function belonging to $B_{pq}^{s}(\R^{n})$,
with $1\leq p\leq\infty$, $0<q\leq\infty$ and $s>\frac{n-d}{p}$,
then $tr_{K}\, f=f|_{K}$.\end{rem}
\begin{defn}
\label{def:Besov on K}Consider $0<d\leq n$ and $K$ a $d$-set.
Let $0<p,q\leq\infty$ and $s>0$. We define the Besov space $\mathbb{B}_{pq}^{s}(K)$
as the set of traces of the elements of $B_{pq}^{s+\frac{n-d}{p}}(\R^{n})$
endowed with the quasi-norm defined by\[
\|f\|_{\mathbb{B}_{pq}^{s}(K)}:=\inf\|g\|_{B_{pq}^{s+\frac{n-d}{p}}(\R^{n})}\]
where the infimum runs over all $g\in B_{pq}^{s+\frac{n-d}{p}}(\R^{n})$
such that $tr_{K}\, g=f$.
\end{defn}
For a motivation for such definition, see \cite[sections 20.2 and 20.3]{Tri97}.
From the considerations above it follows that the \textit{trace} maps
$B_{pq}^{s+\frac{n-d}{p}}(\R^{n})$ linearly and boundedly both into
$L_{p}(K)$ and $\mathbb{B}_{pq}^{s}(K)$, where the parameters are
as in the preceding definition.
\begin{prop}
\label{pro:emb p2<p1}Consider $0<d\leq n$ and $K$ a $d$-set. Let
$0<p_{2}<p_{1}\leq\infty$, $0<q\leq\infty$ and $s>0$. Then\[
\mathbb{B}_{p_{1}q}^{s}(K)\hookrightarrow\mathbb{B}_{p_{2}q}^{s}(K).\]

\end{prop}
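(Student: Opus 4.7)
The plan is, given $f\in\mathbb{B}_{p_{1}q}^{s}(K)$ with an admissible extension $g\in B_{p_{1}q}^{s+(n-d)/p_{1}}(\Rn)$ of $f$, to construct a new extension $\tilde{g}\in B_{p_{2}q}^{s+(n-d)/p_{2}}(\Rn)$ with the same trace on $K$, by truncating the wavelet expansion of $g$ to those wavelets whose supports lie near $K$. The two key ingredients are the $d$-set covering estimate of Remark \ref{rem:dsetcover} (which ensures that at each dyadic scale $j$ only $\lesssim 2^{jd}$ such wavelets survive) and H\"older's inequality (which converts this count into exactly the correct shift between the $(p_{1},q,s+(n-d)/p_{1})$ and $(p_{2},q,s+(n-d)/p_{2})$ Besov parameters).

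More concretely: pick $g$ nearly attaining the infimum defining $\|f\|_{\mathbb{B}_{p_{1}q}^{s}(K)}$ and, by multiplication by a smooth cutoff equal to $1$ on a fixed neighborhood of $K$, assume $g$ to be compactly supported. Expand $g=\sum_{j,l,m}\lambda_{jm}^{l}\,2^{-jn/2}\Psi_{jm}^{l}$ via Definition \ref{def:Besov-spaces}, with $r$ chosen large enough for both parameter triples. For each scale $j$, let $\mathcal{J}_{j}:=\{m\in\Zn:\,\operatorname{supp}\Psi_{jm}^{l}\cap K\neq\emptyset\}$; since each wavelet support has diameter $\sim 2^{-j}$, these $m$ are precisely the lattice points in a thin neighborhood of $K$, and Remark \ref{rem:dsetcover} (applied at the scale $r=2^{j}$) gives $\#\mathcal{J}_{j}\lesssim 2^{jd}$ uniformly in $j$. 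Set
\[
\tilde{g}\,:=\,\sum_{j\ge 0}\sum_{l=1}^{L_{j}}\sum_{m\in\mathcal{J}_{j}}\lambda_{jm}^{l}\,2^{-jn/2}\Psi_{jm}^{l}.
\]
Every wavelet dropped from $g$ has support disjoint from $K$, so $g-\tilde{g}$ is a distribution vanishing on a neighborhood of $K$, whence $tr_{K}\tilde{g}=tr_{K}g=f$: in the ranges where the trace agrees with $\overline{\,\cdot\,}|_{K}$ (see Remark \ref{rem:continuoustrace} and the surrounding discussion) the zero-averages property is immediate, while in the low-integrability range $p<1$ one reduces to that case through the auxiliary Besov space with finite ``$q$'' used to define the trace.

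Finally, estimate the $B_{p_{2}q}^{s+(n-d)/p_{2}}$-quasi-norm of $\tilde{g}$ through the wavelet expression \eqref{eq:quasi-norm}. H\"older's inequality on the finite sum over $\mathcal{J}_{j}$ gives
\[
\Bigl(\sum_{m\in\mathcal{J}_{j}}|\lambda_{jm}^{l}|^{p_{2}}\Bigr)^{1/p_{2}}\le(\#\mathcal{J}_{j})^{1/p_{2}-1/p_{1}}\Bigl(\sum_{m\in\mathcal{J}_{j}}|\lambda_{jm}^{l}|^{p_{1}}\Bigr)^{1/p_{1}}\lesssim 2^{jd(1/p_{2}-1/p_{1})}\Bigl(\sum_{m\in\Zn}|\lambda_{jm}^{l}|^{p_{1}}\Bigr)^{1/p_{1}},
\]
and the algebraic identity
\[
\bigl(s+(n-d)/p_{2}-n/p_{2}\bigr)+d\bigl(1/p_{2}-1/p_{1}\bigr)\,=\,s-d/p_{1}\,=\,\bigl(s+(n-d)/p_{1}\bigr)-n/p_{1}
\]
shows that the scale-dependent prefactor in the $\tilde{g}$-quasi-norm at level $p_{2}$ matches exactly the one in the $g$-quasi-norm at level $p_{1}$ after this trade. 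Summing in $\ell^{q}$ over $j$ yields $\|\tilde{g}\|_{B_{p_{2}q}^{s+(n-d)/p_{2}}}\lesssim\|g\|_{B_{p_{1}q}^{s+(n-d)/p_{1}}}$, and Definition \ref{def:Besov on K} then delivers $\|f\|_{\mathbb{B}_{p_{2}q}^{s}(K)}\lesssim\|f\|_{\mathbb{B}_{p_{1}q}^{s}(K)}$. The main obstacle I expect is the trace-invariance step above, particularly for the borderline parameter values $p=\infty$ or $q=\infty$ (where Schwartz is not dense) and $p<1$ (where the trace is only defined indirectly); the safest route is via the $\overline{\,\cdot\,}|_{K}$ identification, exploiting that $g-\tilde{g}$ is supported strictly outside a neighborhood of $K$.
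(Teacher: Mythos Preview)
The paper does not supply its own proof of this proposition: it merely cites \cite{Tri97} for the case $d=n$, \cite{Mou01a} (quarkonial decompositions) for $0<d<n$, and remarks that atomic decompositions would also work. Your argument is precisely a wavelet version of that atomic-decomposition route and is essentially correct; the H\"older step together with the $d$-set count $\#\mathcal{J}_{j}\lesssim 2^{jd}$ is exactly the mechanism that produces the shift $(n-d)/p_{1}\mapsto(n-d)/p_{2}$ in the smoothness index, and your algebraic identity checks out.

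One imprecision worth flagging: it is \emph{not} true that $g-\tilde g$ vanishes on a neighborhood of $K$, because the discarded wavelets at high frequencies $j$ may have supports accumulating on $K$. What \emph{is} true is that every partial sum $\sum_{j\le j_{0}}\sum_{l}\sum_{m\notin\mathcal{J}_{j}}\lambda_{jm}^{l}2^{-jn/2}\Psi_{jm}^{l}$ is a finite sum of smooth functions each supported away from the compact set $K$, hence vanishes on a neighborhood of $K$ and has trace zero; then continuity of the trace operator and convergence of the wavelet expansion in the Besov (quasi-)norm give $tr_{K}(g-\tilde g)=0$. For $q=\infty$ one first embeds into a space with finite $q$ and slightly smaller smoothness, and for $p_{1}=\infty$ one uses the uniform convergence of the wavelet series (Remark \ref{rem:unif conv series}(b)) to see that $\tilde g$ is continuous and $tr_{K}\tilde g=\tilde g|_{K}=g|_{K}$. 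The consistency of the two trace operators (from the $p_{1}$-space and the $p_{2}$-space) on $\tilde g$ then follows because both agree with pointwise restriction on the smooth partial sums. You correctly identify this as the delicate step; once it is phrased as above, the argument is complete.
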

A sketch of a proof for this result, at least for $d=n$, can be seen
in \cite[Step 2 in p. 165]{Tri97}. The argument is not clear to us
when $0<d<n$, but a proof in this situation can be seen in \cite[Proposition 2.18]{Mou01a},
where quarkonial decompositions were used. In both cases, a proof
with atomic decompositions can also be used instead.

From Remark \ref{rem:continuoustrace} it follows that the trace on
a $d$-set $K$, with $0<d\leq n$, of a continuous function belonging
to $B_{pq}^{s+\frac{n-d}{p}}(\R^{n})$, with $1\leq p\leq\infty$,
$0<q\leq\infty$ and $s>0$, is still a continuous function (on $K$).
In the sequel we shall need a partial converse for this result, the
proof of which is sketched below:
\begin{prop}
\label{pro:cont ext}Consider $0<d\leq n$ and $K$ a $d$-set. Let
$1\leq p,q\leq\infty$ and $0<s<1$. Any continuous function in $\mathbb{B}_{pq}^{s}(K)$
can be obtained as the trace (or pointwise restriction) of a continuous
function in $B_{pq}^{s+\frac{n-d}{p}}(\R^{n})$.\end{prop}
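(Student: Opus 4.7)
The strategy is to exhibit an explicit Whitney-type extension à la Jonsson--Wallin \cite{JW84} and to verify that the extension thus produced is continuous on $\R^n$ whenever the boundary datum is continuous on $K$. First I fix a Whitney covering $\{Q_i\}_{i\in I}$ of the open set $\R^n\setminus K$ by dyadic cubes with $\ell(Q_i)\approx\operatorname{dist}(Q_i,K)$, together with a smooth partition of unity $\{\varphi_i\}_{i\in I}$ subordinate to a mild dilation $\{Q_i^*\}$ of that covering (each $x\in\R^n\setminus K$ lying in only boundedly many of the $Q_i^*$). For each $i$ I choose $p_i\in K$ such that $|x-p_i|\lesssim\ell(Q_i)$ for every $x\in Q_i^*$ (possible by the Whitney property), and, for the given continuous $f\in\mathbb{B}_{pq}^{s}(K)$, I set
\[
g(x):=\begin{cases} f(x), & x\in K,\\[1mm] \displaystyle\sum_{i\in I}f(p_i)\,\varphi_i(x), & x\in\R^n\setminus K.\end{cases}
\]

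Because $0<s<1$, constants suffice as local polynomial approximants in the Jonsson--Wallin machinery, and the operator $f\mapsto g$ is linear and bounded from $\mathbb{B}_{pq}^{s}(K)$ into $B_{pq}^{s+(n-d)/p}(\R^n)$ for all admissible $1\leq p,q\leq\infty$; cf.\ \cite[Ch.~VII]{JW84}. Continuity of $g$ then has to be checked at each point. On $K$ it is inherited from $f$, and on the open set $\R^n\setminus K$ the local finiteness of the Whitney sum makes $g$ even smooth there. The only delicate case is a sequence $\R^n\setminus K\ni x_k\to x_0\in K$. Put $I_k:=\{i:\varphi_i(x_k)\neq0\}$; since $x_k\in Q_i^*$ for $i\in I_k$, the Whitney property forces $\ell(Q_i)\approx\operatorname{dist}(x_k,K)\leq|x_k-x_0|$, so $|p_i-x_0|\lesssim|x_k-x_0|$ uniformly in $i\in I_k$. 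Uniform continuity of $f$ on the compact set $K$ therefore yields $f(p_i)\to f(x_0)$ uniformly in $i\in I_k$, and using $\sum_i\varphi_i(x_k)=1$ I conclude
\[
|g(x_k)-f(x_0)|\leq\max_{i\in I_k}|f(p_i)-f(x_0)|\longrightarrow 0.
\]

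To finish, Remark \ref{rem:continuoustrace} applied to the continuous $g\in B_{pq}^{s+(n-d)/p}(\R^n)$ (admissible because $s+(n-d)/p>(n-d)/p$) yields $tr_K\,g=g|_K=f$. The principal difficulty is the Besov bound itself, but this is already available from the Jonsson--Wallin extension theorem; the genuinely new observation in this proposition is the qualitative fact that the same Whitney construction which realises the norm estimate automatically preserves continuity across the interface $\partial K$, and this requires only the covering geometry together with the uniform continuity of $f$ on the compact $d$-set $K$. (The limit case $p=\infty$ is even softer, since then $B_{\infty q}^{s}(\R^n)\hookrightarrow C(\R^n)$, as noted in Subsection \ref{sub:Function-spaces}.)
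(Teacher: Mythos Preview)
Your strategy matches the paper's: use the Jonsson--Wallin Whitney extension and verify continuity across $\partial K$ via the uniform continuity of $f$ on the compact $K$. There is, however, a gap in the justification of the Besov bound. The operator for which \cite[Theorem~3, p.~155]{JW84} actually proves boundedness into $B_{pq}^{s+(n-d)/p}(\R^n)$ uses local \emph{averages},
\[
\mathcal{E}f(x)=\sum_{i}\phi_i(x)\,\frac{1}{\mu_K(B_{6l_i}(x_i))}\int_{B_{6l_i}(x_i)}f\,d\mu_K,
\]
not point values $f(p_i)$. Your point-evaluation variant is not even well-defined on $\mathbb{B}_{pq}^{s}(K)$ as a space of $\mu_K$-equivalence classes, so the asserted linear boundedness has no meaning; for the particular continuous $f$ at hand the construction makes sense, but membership of your $g$ in $B_{pq}^{s+(n-d)/p}(\R^n)$ is then not a citation from \cite{JW84}---you would have to rerun their estimates for your modified operator. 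The paper takes the opposite trade-off: it keeps the averaging extension $\mathcal{E}f$ (so that the Besov estimate \emph{is} a direct citation) and then carries out the slightly longer continuity check, bounding $|\mathcal{E}f(x)-f(t_0)|$ by $\max_{|t-t_k|\leq 32 l_k}|f(t)-f(t_k)|$ for a nearby $t_k\in K$.

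A second point you omit is the case $d=n$. The averaging extension $\mathcal{E}f$ is defined only on $K^c$, which is a set of full Lebesgue measure precisely when $d<n$; for $d=n$ the paper lifts $K$ to the $n$-set $K\times\{0\}\subset\R^{n+1}$ (now with $n<n+1$), applies the case already treated, and restricts back via the classical trace $B_{pq}^{s+1/p}(\R^{n+1})\to B_{pq}^{s}(\R^n)$. Your $g$ is at least defined everywhere, but the Besov bound from \cite{JW84} is again not directly available when $K$ has positive Lebesgue measure, so this case still needs separate attention.
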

\begin{proof}
(i) We start with the case $0<d<n$. 

We shall need to consider a Whitney decomposition of $K^{c}$ by a
family of closed $n$-cubes $Q_{i}$ and an associated partition of
unity by functions $\phi_{i}$. We use here the notations and conventions
of \cite[pp. 23-24 and 155-157]{JW84}, except that our $K$ here
is in the place of $F$ over there. In particular, $x_{i}$, $s_{i}$
and $l_{i}$ shall, respectively, stand for the center of $Q_{i}$,
its side length and its diameter. Moreover, given $f\in\mathbb{B}_{pq}^{s}(K)$,
the function $\mathcal{E}f$ is defined by\[
\mathcal{E}f(x):=\sum_{i\in I}\phi_{i}(x)\frac{1}{\mu_{K}(B_{6l_{i}}(x_{i}))}\int_{|t-x_{i}|\leq6l_{i}}f(t)\, d\mu_{K}(t),\qquad x\in K^{c},\]
where $I$ is the set of indices $i$ such that $s_{i}\leq1$ and
$\mu_{K}$ is the mass distribution supported on $K$ according to
Definition \ref{def:d-set}. Notice that $\mathcal{E}f$ is defined
a.e. in $\R^{n},$ because the asumption $d<n$ guarantees that $K$
has Lebesgue measure 0.

According to \cite[Theorem 3 in p. 155]{JW84}, $\mathcal{E}f\in B_{pq}^{s+\frac{n-d}{p}}(\R^{n})$,
$\mathcal{E}f$ is $C^{\infty}$ in $K^{c}$ (so, in particular, it
is continuous on $K^{c}$) and $tr_{K}\mathcal{E}f=(\overline{\mathcal{E}f})|_{K}=f$.
Since $\overline{\mathcal{E}f}=\mathcal{E}f$ a.e., $\overline{\mathcal{E}f}$
is a representative of an element of $B_{pq}^{s+\frac{n-d}{p}}(\R^{n})$
whose pointwise restriction to $K$ coincides with $f$. Since the
identity $\overline{\mathcal{E}f}(x)=\mathcal{E}f(x)$ holds for any
$x\in K^{c}$, where we already know this function is continuous,
it remains to show that $\overline{\mathcal{E}f}$ is continuous on
$K$, i.e., that\[
\forall t_{0}\in K,\,\forall\varepsilon>0,\,\exists\delta>0:\,\forall x\in\R^{n},\, x\in B_{\delta}(t_{0})\Rightarrow|\overline{\mathcal{E}f}(x)-f(t_{0})|<\varepsilon.\]
The implication being trivially true when $x$ is also in $K$, we
can assume that $x\in B_{\delta}(t_{0})\cap K^{c}$, in which case
we have to arrive to the conclusion that $|\mathcal{E}f(x)-f(t_{0})|<\varepsilon$.

Pick one $Q_{k}$ containing $x$ and consider $t_{k}\in K$ and $y_{k}\in Q_{k}$
such that $|y_{k}-t_{k}|={\rm dist}\{Q_{k},K\}$. Notice that $|t_{k}-t_{0}|\leq3\delta$
and that\[
|\mathcal{E}f(x)-f(t_{0})|\leq|\mathcal{E}f(x)-f(t_{k})|+|f(t_{k})-f(t_{0})|,\]
hence, by the continuity of $f$ on $K$, for sufficiently small $\delta>0$
one gets $|f(t_{k})-f(t_{0})|<\varepsilon/2$ and, therefore, we only
need to show that $|\mathcal{E}f(x)-f(t_{k})|<\varepsilon/2$ too.

Assume that we will choose $\delta>0$ small enough, so that, in particular,
the restriction $s_{i}\leq1$ for the indices $i\in I$ is not really
a restriction and, therefore, $\sum_{i\in I}\phi_{i}(x)=1$. Then
\begin{eqnarray*}
|\mathcal{E}f(x)-f(t_{k})| & = & |\sum_{i\in I}\phi_{i}(x)\frac{1}{\mu_{K}(B_{6l_{i}}(x_{i}))}\int_{|t-x_{i}|\leq6l_{i}}f(t)\, d\mu_{K}(t)-\sum_{i\in I}\phi_{i}(x)f(t_{k})|\\
 & \leq & \sum_{i\in I}\phi_{i}(x)\frac{1}{\mu_{K}(B_{6l_{i}}(x_{i}))}\int_{|t-x_{i}|\leq6l_{i}}|f(t)-f(t_{k})|\, d\mu_{K}(t)\\
 & \lesssim & \frac{1}{l_{k}^{d}}\int_{|t-x_{k}|\leq27l_{k}}|f(t)-f(t_{k})|\, d\mu_{K}(t)\\
 & \leq & \frac{1}{l_{k}^{d}}\int_{|t-t_{k}|\leq32l_{k}}|f(t)-f(t_{k})|\, d\mu_{K}(t)\\
 & \lesssim & \max_{|t-t_{k}|\leq32l_{k}}|f(t)-f(t_{k})|.\end{eqnarray*}
The desired estimate follows then from the given continuity of $f$
on (the compact set) $K$, by choosing $l_{k}$ small enough, to which
it suffices to choose a small enough $\delta>0$.

(ii) Now we consider the case $d=n$.

Given a continuous $f\in\mathbb{B}_{pq}^{s}(K)$, we want to show
that there exists a continuous $g\in B_{pq}^{s+\frac{n-n}{p}}(\R^{n})$
such that $g|_{K}=f$. Define $f_{1}:\, K\times\{0\}\subset\R^{n+1}\to\C$
by $f_{1}(x,0):=f(x)$, obtaining in this way a continuous function
in $\mathbb{B}_{pq}^{s}(K\times\{0\})$. Since $K\times\{0\}$ is
an $n$-set in $\R^{n+1}$, with $0<n<n+1$, we can apply part (i)
to say that there exists a continuous $g_{1}\in B_{pq}^{s+\frac{1}{p}}(\R^{n+1})$
with $g_{1}|_{K\times\{0\}}=f_{1}$, and from here one gets that $g:\,\R^{n}\to\C$
given by $g(x):=g_{1}(x,0)$ is the required function. We have taken
advantage of an old trace result, which can, for example, be seen
in \cite[Theorem 3 in p. 19]{JW84}, which states that $B_{pq}^{s}(\R^{n})$
can be identified with the traces on $\R^{n}\times\{0\}$ of the elements
of $B_{pq}^{s+\frac{1}{p}}(\R^{n+1})$.
\end{proof}

\section{Main results and proofs}

With $0\leq d\leq n$ and $0<s\leq1$, define \begin{equation}
H(d,s):=\begin{cases}
d+1-s & \mbox{ if }s<d\\
d/s & \mbox{ if }s\geq d\end{cases},\label{eq:H(d,s)}\end{equation}

\noindent or, what turns out to be the same (cf. also the end of the proof of the next proposition), $H(d,s):=\min\{d+1-s,d/s\}$.

\begin{prop}
\label{pro:lessthan}Let $0<s\leq1$ and $K$ be a $d$-set in $\Rn$,
with $0<d\leq n$. If $f\in\mathcal{C}^{s}(K)$ then $\dim_{H}\Gamma(f)\leq H(d,s)$.\end{prop}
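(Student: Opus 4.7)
The plan is to establish the two upper bounds $\dim_H\Gamma(f)\leq d+1-s$ and $\dim_H\Gamma(f)\leq d/s$ separately, and then observe that $H(d,s)=\min\{d+1-s,d/s\}$. The second identity follows from a routine case split: for $s\leq d$ the inequality $d/s\leq d+1-s$ rearranges to $s(1-s)\leq d(1-s)$, which holds since $s\leq 1$; for $s>d$ the opposite inequality follows by the same manipulation. I will record this remark at the end of the proof.

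For the bound $d+1-s$, I would work with the upper box counting dimension and invoke Remark \ref{rem:2 prop dim}(a). Tessellate $\Rn$ by the dyadic grid of side $2^{-j}$. By Remark \ref{rem:dsetcover}, $K$ meets $\lesssim 2^{jd}$ such $n$-cubes $Q$. The H\"older condition gives ${\rm osc}_{Q\cap K}f\lesssim (2^{-j})^{s}=2^{-js}$, so the portion of $\Gamma(f)$ sitting over $Q$ fits inside a slab of vertical extent $\lesssim 2^{-js}$ and is covered by $\lesssim 2^{j(1-s)}+1$ cubes of side $2^{-j}$ in $\R^{n+1}$; here the assumption $s\leq 1$ is what ensures $2^{j(1-s)}\geq 1$ and hence that this count is $\lesssim 2^{j(1-s)}$. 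Summing, $N_j(\Gamma(f))\lesssim 2^{j(d+1-s)}$, so $\overline{\dim}_B\Gamma(f)\leq d+1-s$ and the Hausdorff bound follows.

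For the bound $d/s$, I would bound the Hausdorff measure directly using the same dyadic cover of $K$. Each cube $Q$ of side $2^{-j}$ has diameter $\sqrt{n}\,2^{-j}$, and for $x,y\in Q\cap K$ the H\"older hypothesis gives
\[
|(x,f(x))-(y,f(y))|\leq\sqrt{|x-y|^{2}+c^{2}|x-y|^{2s}}\lesssim 2^{-js}
\]
(again using $s\leq 1$ to absorb the horizontal term into the vertical one for small distances). Hence $\Gamma(f)$ is covered by $\lesssim 2^{jd}$ sets of diameter at most $\delta_j:=c'2^{-js}\to 0$, and for any $t>d/s$
\[
\mathcal{H}^{t}_{\delta_j}(\Gamma(f))\lesssim 2^{jd}(2^{-js})^{t}=2^{j(d-st)}\longrightarrow 0,
\]
so $\mathcal{H}^{t}(\Gamma(f))=0$, giving $\dim_H\Gamma(f)\leq d/s$. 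Combining the two bounds concludes the proof.

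There is really no serious obstacle here: both estimates are quite short and rest on the $d$-set cardinality estimate of Remark \ref{rem:dsetcover} together with the H\"older oscillation bound. The only technical point worth watching is that $s\leq 1$ is used in both arguments (to dominate horizontal by vertical in the second, and to guarantee $2^{j(1-s)}\geq 1$ in the first), and that the final algebraic check identifying $H(d,s)$ with $\min\{d+1-s,d/s\}$ is handled cleanly.
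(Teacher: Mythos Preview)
Your argument is correct and follows essentially the same route as the paper: both bounds are obtained from the $d$-set cube count of Remark~\ref{rem:dsetcover} combined with the H\"older oscillation estimate, the only cosmetic difference being that the paper derives the $d/s$ bound via $\overline{\dim}_B$ using cubes of side $2^{-j/s}$, whereas you stay at scale $2^{-j}$ and estimate $\mathcal{H}^t$ directly. One small slip to fix: in your closing paragraph you wrote ``for $s\leq d$ the inequality $d/s\leq d+1-s$\ldots'', but the rearrangement $s(1-s)\leq d(1-s)$ you give actually verifies the reverse inequality $d+1-s\leq d/s$, which is what is needed (so that $d+1-s$ is the minimum when $s\leq d$).
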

\begin{proof}
We use the inequality $\dim_{H}\Gamma(f)\leq\overline{\dim}_{B}\Gamma(f)$
and estimate the latter dimension.

As mentioned in Remark \ref{rem:dsetcover}, given any $j\in\N$,
$K$ can be covered by $c_{1}\,2^{jd}$ cubes of side length $2^{-j}$
in a corresponding regular tessellation of $\Rn$ by dyadic cubes
of sides parallel to the axes. The part of the graph of $f$ \emph{over}
any one of such cubes can, obviously, be covered by $c_{2}\,2^{-j(s-1)}$
cubes of side length $2^{-j}$ of a regular tessellation of $\R^{n+1}$
by corresponding dyadic cubes of sides parallel to the axes. Therefore
$\overline{\dim}_{B}\Gamma(f)\leq\limsup_{j\to\infty}\frac{\log_{2}(c_{1}c_{2})+j(d-s+1)}{j}=d+1-s$.

Alternatively, and using again Remark \ref{rem:dsetcover}, given
any $j\in\N$, $K$ can be covered by $c_{1}\,2^{jd/s}$ cubes of
side length $2^{-j/s}$ in a corresponding regular tessellation of
$\Rn$ by cubes of sides parallel to the axes. The part of the graph
of $f$ \emph{over} any one of such cubes can, obviously, be covered
by $c_{2}$ parallelepipeds of height $2^{-j}$, so that the whole
graph can be covered by $c_{1}\, c_{2}\,2^{jd/s}$ of such parallelepipeds.
Since each one of these is covered by at most $2^{n+1}$ cubes of
side length $2^{-j}$ of a regular tessellation of $\R^{n+1}$ by
corresponding dyadic cubes of sides parallel to the axes, then we
see that $c_{3}\,2^{jd/s}$ of the latter cubes are enough to cover
$\Gamma(f)$, hence $\overline{\dim}_{B}\Gamma(f)\leq\limsup_{j\to\infty}\frac{\log_{2}(c_{3})+jd/s}{j}=d/s$.

Observe now that, apart from the obvious case $s=1$, the inequality
$d/s\leq d+1-s$ holds if, and only if, $d\leq s$, which concludes
the proof.
\end{proof}
Let $[0,2\pi]$ be endowed with its uniform Lebesgue measure, so that
it becomes a probability space. In what follows, $\Pi$ shall stand
for the product space $([0,2\pi]^{\N})^{n}$ of $n$ copies of the
infinite product space $[0,2\pi]^{\N}$. The elements of $\Pi$ shall
usually be denoted by $\theta$ and we shall commit the abuse of notation
of denoting by $d\theta$ both the measure in $\Pi$ and integration
with respect to such measure, as in $\int_{\Pi}\, d\theta$.
\begin{thm}
\label{thm:weierstrass}Let $\rho>1$, $0<s<1$, $\theta=((\theta_{ij})_{j\in\N})_{i=1,\ldots,n}\in\Pi$
and $W_{s,\theta}$ be the function defined by\[
W_{s,\theta}(x):=\sum_{i=1}^{n}\sum_{j=0}^{\infty}\rho^{-js}\cos(\rho^{j}x_{i}+\theta_{ij}),\qquad x=(x_{1},\ldots,x_{n})\in\Rn.\]
Let $0<d\leq n$ and $K$ be a $d$-set. Then\[
\dim_{H}\Gamma(W_{s,\theta}|_{K})=H(d,s)\quad\theta\mbox{-a.e.},\]
where $H(d,s)$ is as defined in \eqref{eq:H(d,s)}.\end{thm}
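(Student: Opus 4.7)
The upper bound $\dim_H\Gamma(W_{s,\theta}|_K)\le H(d,s)$ holds deterministically for every $\theta\in\Pi$: Example \ref{exa:holder} gives $W_{s,\theta}|_K\in\mathcal{C}^s(K)$ with H\"older constant independent of $\theta$, and Proposition \ref{pro:lessthan} applies. So the plan is to prove the matching lower bound $\theta$-a.e.\ by means of the potential-theoretic criterion in Remark \ref{rem:4 prop Hausd dim}(d). Concretely, I would show that for every $t<H(d,s)$ there is, for $\theta$-a.e.\ $\theta$, a mass distribution on $\Gamma(W_{s,\theta}|_K)$ with finite $t$-energy; running $t$ through a countable sequence approaching $H(d,s)$ then yields $\dim_H\Gamma\ge H(d,s)$ $\theta$-a.e.

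The natural mass distribution is the push-forward $\nu_\theta:=(\Phi_\theta)_*\mu_K$ under $\Phi_\theta(x):=(x,W_{s,\theta}(x))$, with $\mu_K$ the measure from Definition \ref{def:d-set}. By Fubini,
\[
\int_\Pi I_t(\nu_\theta)\,d\theta \;=\; \int_K\int_K\int_\Pi\bigl(|x-y|^2+|W_{s,\theta}(x)-W_{s,\theta}(y)|^2\bigr)^{-t/2}\,d\theta\,d\mu_K(x)\,d\mu_K(y),
\]
so finiteness of the right-hand side forces $I_t(\nu_\theta)<\infty$ for $\theta$-a.e.\ $\theta$. The heart of the matter is an estimate on the innermost integral, which in turn rests on a density bound for the random variable $Z_{xy}(\theta):=W_{s,\theta}(x)-W_{s,\theta}(y)$: writing $r:=|x-y|$, I would establish (adapting Hunt \cite{Hunt98}) that the $\theta$-distribution of $Z_{xy}$ has a density $p_{x,y}$ with $\|p_{x,y}\|_\infty\le Cr^{-s}$, uniformly in $x\neq y$ in $K$. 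Combined with the deterministic bound $|Z_{xy}|\le C'r^s$ (again from Example \ref{exa:holder}), this pins $p_{x,y}$ down as supported in $[-C'r^s,C'r^s]$ and bounded by $Cr^{-s}$ there.

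Granted this, the inner integral is majorised by $Cr^{-s}\int_{-C'r^s}^{C'r^s}(r^2+z^2)^{-t/2}\,dz$, which a routine split at $|z|=r$ bounds by $C r^{-st}$ when $t<1$ and by $C r^{1-s-t}$ when $t>1$. Example \ref{exa:par=0000EAnteses do lem0.2} then gives $\int_K\int_K r^{-st}\,d\mu_K(x)\,d\mu_K(y)<\infty$ iff $t<d/s$, and $\int_K\int_K r^{1-s-t}\,d\mu_K(x)\,d\mu_K(y)<\infty$ iff $t<d+1-s$. Since $H(d,s)\le 1$ exactly when $H(d,s)=d/s$ (i.e.\ when $s\ge d$) and $H(d,s)>1$ exactly when $H(d,s)=d+1-s$ (i.e.\ when $s<d$), the hypothesis $t<H(d,s)$ always places $t$ in a regime where the relevant integral converges. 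The principal obstacle is the density bound for $p_{x,y}$: to obtain it one selects a coordinate $i_0$ with $|x_{i_0}-y_{i_0}|\gtrsim r$, conditions on the $\theta_{i,j}$ with $i\neq i_0$ to reduce to a one-dimensional sum of the form $\sum_j a_j\sin(\Theta_j)$ with $\Theta_j$ i.i.d.\ uniform on $[0,2\pi]$, and estimates its characteristic function (a product of Bessel functions $J_0(\xi a_j)$) via the decay $|J_0(\eta)|\lesssim|\eta|^{-1/2}$, taking care that the resulting constants are uniform in $x,y$ and independent of the frozen coordinates.
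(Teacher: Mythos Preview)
Your proposal is correct and follows essentially the same route as the paper: push-forward of $\mu_K$ onto the graph, Fubini, Hunt's density bound $\|p_{x,y}\|_\infty\lesssim r^{-s}$ obtained coordinatewise (the paper phrases the reduction as a convolution rather than conditioning, which is the same thing), the H\"older support constraint $|Z_{xy}|\lesssim r^s$, and Example~\ref{exa:par=0000EAnteses do lem0.2} to integrate over $K\times K$. The only cosmetic difference is the organisation of cases: the paper splits according to $d>s$ versus $d\le s$ (and in the former case drops the support bound altogether, since $\int_\R(1+w^2)^{-t/2}\,dw$ already converges for $t>1$), whereas you keep both bounds throughout and split according to $t<1$ versus $t>1$; the two splits match because $H(d,s)\le1$ precisely when $s\ge d$.
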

\begin{proof}
Due to Proposition \ref{pro:lessthan} and Example \ref{exa:holder},
the inequality $\leq$ is clear, even for all $\theta$.

In order to prove the opposite inequality, we use the criteria of
Remark \ref{rem:4 prop Hausd dim}(d).

Let $\mu_{\theta}$ be the Borel measure supported on $\Gamma(W_{s,\theta}|_{K})$
defined by $\mu_{K}\circ(I,W_{s,\theta})^{-1}$, where $I$ is the
identity in $\Rn$ and $\mu_{K}$ is the mass distribution supported
on $K$ according to Definition \ref{def:d-set}.

Given $t>0$,\begin{eqnarray}
\lefteqn{\lefteqn{\int_{\Pi}\int_{\R^{n+1}}\int_{\R^{n+1}}\frac{1}{|P-Q|^{t}}\, d\mu_{\theta}(P)\, d\mu_{\theta}(Q)\, d\theta}}\nonumber \\
 & = & \int_{\Pi}\int_{\R^{n}}\int_{\R^{n}}\frac{1}{|(I,W_{s,\theta})(x)-(I,W_{s,\theta})(y)|^{t}}\, d\mu_{K}(x)\, d\mu_{K}(y)\, d\theta\nonumber \\
 & = & \int_{\Rn}\int_{\Rn}\int_{\Pi}\frac{1}{(|x-y|^{2}+(W_{s,\theta}(x)-W_{s,\theta}(y))^{2})^{t/2}}\, d\theta\, d\mu_{K}(x)\, d\mu_{K}(y)\nonumber \\
 & = & \int_{\Rn}\int_{\Rn}\int_{\R}\frac{1}{(|x-y|^{2}+z^{2})^{t/2}}\,(d\theta\circ A^{-1})(z)\, d\mu_{K}(x)\, d\mu_{K}(y),\label{eq:tripleint}\end{eqnarray}
 where, for each fixed $x,y\in\Rn$, $A:\Pi\to\R$ is the function
defined by $A(\theta):=W_{s,\theta}(x)-W_{s,\theta}(y)$. 

Now observe that $A(\theta)=\sum_{i=1}^{n}A_{i}(\theta)$, with \[
A_{i}(\theta)=\sum_{j=0}^{\infty}q_{ij}\sin(r_{ij}+\theta_{ij}),\]
where $q_{ij}$ and $r_{ij}$ do not depend on $\theta$. Adapting
\cite[pp. 797-798]{Hunt98} to our setting, under the assumption $0<|x_{i}-y_{i}|<\frac{\pi}{\rho^{2}}$,
the measure $d\theta\circ A^{-1}$ is absolutely continuous with respect
to the Lebesgue measure in $\R$, with density function $h_{i}$ satisfying
the estimate\begin{equation}
h_{i}(z)\leq C\,|x_{i}-y_{i}|^{-s},\label{eq:majhi}\end{equation}
 where the positive constant $C$ depends only on $\rho$. It is also
easily seen that\[
d\theta\circ(A_{i},\sum_{{{\scriptstyle k=1}\atop {\scriptstyle k\not=i}}}^{n}A_{k})^{-1}=(d\theta\circ A_{i}^{-1})\otimes(d\theta\circ(\sum_{{{\scriptstyle k=1}\atop {\scriptstyle k\not=i}}}^{n}A_{k})^{-1}),\]
hence the density function $h$ of $d\theta\circ A^{-1}$ is given
by the convolution of the density functions $h_{i}$ of $d\theta\circ A_{i}^{-1}$
and, say, $\breve{h_{i}}$ of $d\theta\circ(\sum_{{{\scriptstyle k=1}\atop {\scriptstyle k\not=i}}}^{n}A_{k})^{-1}$.
Fixing now an $i$ such that $|x_{i}-y_{i}|=\max_{1\leq k\leq n}|x_{k}-y_{k}|$
and assuming that $0<|x-y|<\frac{\pi}{\rho^{2}}$, from \eqref{eq:majhi}
we then get\begin{equation}
h(z)=(h_{i}\star\breve{h_{i}})(z)\leq(\sup_{w\in\R}h_{i}(w))\int_{\R}\breve{h_{i}}(t)\, dt\leq C\, n^{s/2}\,|x-y|^{-s},\qquad\forall z\in\R,\label{eq:majh}\end{equation}
 where $C$ is the same constant as in \eqref{eq:majhi}.

Returning to \eqref{eq:tripleint}, we can now write, taking into
account that the hypotheses guarantee that $(\mu_{K}\otimes\mu_{K})(\{x=y\})=0$,
\begin{eqnarray}
\lefteqn{\int_{\Pi}\int_{\R^{n+1}}\int_{\R^{n+1}}\frac{1}{|P-Q|^{t}}\, d\mu_{\theta}(P)\, d\mu_{\theta}(Q)\, d\theta}\label{eq:tripleint-1}\\
 & = & \int_{{{\scriptstyle \Rn\times\Rn}\atop {\scriptstyle |x-y|\geq\frac{\pi}{\rho^{2}}}}}\int_{\R}\frac{h(z)}{(|x-y|^{2}+z^{2})^{t/2}}\, dz\, d(\mu_{K}\otimes\mu_{K})(x,y)\nonumber \\
 &  & +\;\int_{{{\scriptstyle \Rn\times\Rn}\atop {\scriptstyle 0<|x-y|<\frac{\pi}{\rho^{2}}}}}\int_{\R}\frac{h(z)}{(|x-y|^{2}+z^{2})^{t/2}}\, dz\, d(\mu_{K}\otimes\mu_{K})(x,y),\nonumber \end{eqnarray}
where the first term on the right-hand side is clearly finite, while,
due to \eqref{eq:majh}, the second term can be estimated from above
by\begin{eqnarray}
\lefteqn{C\, n^{s/2}\,\int_{{{\scriptstyle \Rn\times\Rn}\atop {\scriptstyle 0<|x-y|<\frac{\pi}{\rho^{2}}}}}\int_{\R}\frac{|x-y|^{-s}}{(|x-y|^{2}+z^{2})^{t/2}}\, dz\, d(\mu_{K}\otimes\mu_{K})(x,y)}\nonumber \\
 & \!\!\!\!\!\!\!\!\!\!\!\!\!\!\!\! = & \!\!\!\!\!\!\!\!\!\! C\, n^{s/2}\,\int_{{{\scriptstyle \Rn\times\Rn}\atop {\scriptstyle 0<|x-y|<\frac{\pi}{\rho^{2}}}}}|x-y|^{-s+1-t}\int_{\R}\frac{1}{(1+w^{2})^{t/2}}\, dw\, d(\mu_{K}\otimes\mu_{K})(x,y).\label{eq:bothcases}\end{eqnarray}

We now need to split the proof in two cases, in order to proceed.

\medskip{}

\textbf{Case $d>s$:} 

Consider \[
t_{m}:=d+1-s-\frac{1}{m},\]
for sufficiently large $m\in\N$ so that $d-s>\frac{2}{m}$. Then,
using $t_{m}$ in the place ot $t$, the inner integral in \eqref{eq:bothcases}
can be estimated from above by\[
\int_{|w|\leq1}\, dw+\int_{|w|>1}\frac{1}{|w|^{t_{m}}}\, dw\;\leq\;2+\frac{4}{d-s},\]
hence \eqref{eq:bothcases} can be estimated from above by\begin{eqnarray*}
\lefteqn{c_{1}\,\int_{{{\scriptstyle \Rn\times\Rn}\atop {\scriptstyle 0<|x-y|<\frac{\pi}{\rho^{2}}}}}|x-y|^{-s+1-t_{m}}\, d(\mu_{K}\otimes\mu_{K})(x,y)}\\
 & \leq & c_{1}\,\int_{\Rn}\int_{\Rn}|x-y|^{-d+1/m}\, d\mu_{K}(x)\, d\mu_{K}(y)\\
 & \leq & c_{2}\,\int_{K}\int_{0}^{{\rm diam}\, K}r^{1/m-1}\, dr\, d\mu_{K}(y)\;<\;\infty,\end{eqnarray*}
where we have used Example \ref{exa:par=0000EAnteses do lem0.2}.
Therefore, we have proved the finiteness of \eqref{eq:tripleint-1}
when using $t=t_{m}=d+1-s-\frac{1}{m}$ for any sufficiently large
natural $m$, and have shown in particular, for any such number $t_{m}$,
that\[
\int_{\R^{n+1}}\int_{\R^{n+1}}\frac{1}{|P-Q|^{t_{m}}}\, d\mu_{\theta}(P)\, d\mu_{\theta}(Q)<\infty\qquad\theta\mbox{-a.e.}.\]
Consequently, by Remark \ref{rem:4 prop Hausd dim}(d),\[
\dim_{H}(\Gamma(W_{s,\theta}|_{K})\geq t_{m}=d+1-s-\frac{1}{m}\qquad\theta\mbox{-a.e.}\]
for any $m$ large enough. Since this is a countable number of possibilities,
we can also state that, for almost all $\theta\in\Pi$, $\dim_{H}(\Gamma(W_{s,\theta}|_{K})\geq d+1-s-\frac{1}{m}$
for all previously considered numbers $m$, so that the required result
follows after letting $m$ tend to infinity.

\medskip{}

\textbf{Case $d\leq s$:}

Here we shall take advantage of the fact that the support of $d\theta\circ A^{-1}$
is contained in $[-c\,|x-y|^{s},c\,|x-y|^{s}]$, for some positive
constant $c$ (independent of $x$ and $y$). That this is the case
follows from Example \ref{exa:holder}.

We return then to the decomposition given above for \eqref{eq:tripleint-1}
and observe that we can replace the integral over $\R$ in the second
term in that decomposition by a corresponding integral over $[-c\,|x-y|^{s},c\,|x-y|^{s}]$,
so that instead of \eqref{eq:bothcases} we can write, up to a constant
factor,\begin{equation}
\int_{{{\scriptstyle \Rn\times\Rn}\atop {\scriptstyle 0<|x-y|<\frac{\pi}{\rho^{2}}}}}|x-y|^{-s+1-t}\int_{0}^{c\,|x-y|^{s-1}}\frac{1}{(1+w^{2})^{t/2}}\, dw\, d(\mu_{K}\otimes\mu_{K})(x,y),\label{eq:bothcases-1}\end{equation}
where, moreover, $c\,|x-y|^{s-1}$ can, without loss of generality,
be assumed to be greater than 1.

Consider now\[
t_{m}:=\frac{d}{s}-\frac{1}{m},\]
for sufficiently large $m\in\N$ so that $t_{m}>0$. Then, using $t_{m}$
in the place ot $t$, the inner integral in \eqref{eq:bothcases-1}
can be estimated from above by\[
\int_{0}^{1}\frac{1}{(1+w^{2})^{t_{m}/2}}\, dw+\int_{1}^{c\,|x-y|^{s-1}}\frac{1}{(1+w^{2})^{t_{m}/2}}\, dw\;<\;\frac{c^{1-t_{m}}}{1-t_{m}}\,|x-y|^{(1-t_{m})(s-1)},\]
hence \eqref{eq:bothcases-1} can be estimated from above by\begin{eqnarray*}
\lefteqn{\frac{c_{3}}{1-t_{m}}\,\int_{{{\scriptstyle \Rn\times\Rn}\atop {\scriptstyle 0<|x-y|<\frac{\pi}{\rho^{2}}}}}|x-y|^{-s+1-t_{m}+(1-t_{m})(s-1)}\, d(\mu_{K}\otimes\mu_{K})(x,y)}\\
 & \leq & \frac{c_{3}}{1-t_{m}}\,\int_{\Rn}\int_{\Rn}|x-y|^{-d+s/m}\, d\mu_{K}(x)\, d\mu_{K}(y)\\
 & \leq & \frac{c_{4}}{1-t_{m}}\int_{K}\int_{0}^{{\rm diam}\, K}r^{s/m-1}\, dr\, d\mu_{K}(y)\;<\;\infty,\end{eqnarray*}
where we have used Example \ref{exa:par=0000EAnteses do lem0.2}.
The rest of the proof follows as in the previous case, the difference
being that now $t_{m}=\frac{d}{s}-\frac{1}{m}$, therefore tends to
$\frac{d}{s}$ when $m$ goes to infinity. \end{proof}
\begin{thm}
\label{thm:Besov up est}Consider $0<d\leq n$ and $K$ a $d$-set.
Let $1\leq p\leq\infty$, $0<q\leq\infty$ and $0<s\leq1$. Let $f$
be any real continuous function in $\mathbb{B}_{pq}^{s}(K)$. Then
$\dim_{H}\Gamma(f)\leq H(d,s)$.\end{thm}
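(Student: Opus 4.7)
My plan is to adapt Roueff's wavelet-and-aggregation scheme (developed for $n$-cubes as underlying domains) to the $d$-set setting, inserting the scale counting from Remark \ref{rem:dsetcover} at every step.

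First, I would reduce to the case $q=\infty$ using the monotonicity $\mathbb{B}_{pq}^{s}(K)\hookrightarrow\mathbb{B}_{p\infty}^{s}(K)$ and invoke Proposition \ref{pro:cont ext} to extend $f$ to a continuous $g\in B_{p\infty}^{s+(n-d)/p}(\R^{n})$ with $g|_{K}=f$. Multiplying by a smooth compactly supported cutoff equal to $1$ in a neighbourhood of $K$ (which preserves the Besov class), I may assume $g$ has compact support; by Remark \ref{rem:unif conv series}(b) its wavelet expansion $g=\sum_{j,l,m}\lambda_{jm}^{l}2^{-jn/2}\Psi_{jm}^{l}$ then converges uniformly, and Definition \ref{def:Besov-spaces} yields the coefficient bound
\[
\Bigl(\sum_{m\in\Zn}|\lambda_{jm}^{l}|^{p}\Bigr)^{1/p}\lesssim 2^{-j(s-d/p)},\qquad j\in\No,\; l=1,\ldots,L_{j}.
\]

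To obtain $\overline{\dim}_{B}\Gamma(f)\leq d+1-s$, I would iterate Lemma \ref{lem:aggregation} at dyadic scale $2^{-J}$. Splitting $g=G_{J}+R_{J}$ with $G_{J}:=\sum_{j<J}g_{j}$ and $g_{j}:=\sum_{l,m}\lambda_{jm}^{l}2^{-jn/2}\Psi_{jm}^{l}$, each smooth $g_{j}$ (Lipschitz constant $\lesssim 2^{j}$ times the local wavelet coefficients) satisfies ${\rm osc}_{Q_{J}}g_{j}\lesssim 2^{j-J}\max_{m}|\lambda_{jm}^{l}|$, the maximum being over the $O(1)$ indices $m$ for which $\Psi_{jm}^{l}$ meets $Q_{J}$. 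Summing over the $\lesssim 2^{Jd}$ level-$J$ cubes $Q_{J}$ that meet $K$ (Remark \ref{rem:dsetcover}) and applying H\"older's inequality together with the companion count $\#\{m:\,{\rm supp}\,\Psi_{jm}^{l}\cap K\neq\emptyset\}\lesssim 2^{jd}$ and the coefficient bound above, one obtains $\sum_{Q_{J}\cap K\neq\emptyset}{\rm osc}_{Q_{J}}g_{j}\lesssim 2^{J(d-1)+j(1-s)}$, and summing in $j<J$ gives $\sum_{Q_{J}\cap K\neq\emptyset}{\rm osc}_{Q_{J}}G_{J}\lesssim 2^{J(d-s)}$, so that the contribution to the cube count is $\lesssim 2^{J(d+1-s)}$. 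The tail $R_{J}$ has to be treated in the same \emph{summed} form $\sum_{Q_{J}\cap K\neq\emptyset}\sup_{Q_{J}}|R_{J}|$ (the naive $\|R_{J}\|_{\infty}$ bound is not summable in general for small $s$): an analogous H\"older + $d$-set count applied to the levels $j\geq J$, where now each wavelet fits inside $O(1)$ cubes $Q_{J}$, yields the same order $\lesssim 2^{J(d+1-s)}$, giving $\overline{\dim}_{B}\Gamma(f)\leq d+1-s$.

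For the remaining bound $\dim_{H}\Gamma(f)\leq d/s$, only needed when $d<s$, I would re-run the aggregation at the coarser reference scale $2^{-J/s}$. In this regime $s>d\geq d/p$, so a Besov embedding on the $d$-set, combined with the continuity hypothesis and (if necessary) an adapted wavelet argument exploiting the vanishing moments \eqref{eq:moments} and the $d$-set structure, produces a H\"older-type oscillation bound ${\rm osc}_{Q\cap K}\,g\lesssim ({\rm diam}\,Q)^{s}$ on cubes $Q$ of side $2^{-J/s}$. Since Remark \ref{rem:dsetcover} gives $\lesssim 2^{Jd/s}$ such cubes and over each only $O(1)$ vertical $2^{-J}$-cubes are needed, the total count is $\lesssim 2^{Jd/s}$, giving $\overline{\dim}_{B}\Gamma(f)\leq d/s$. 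Combining the two cases and using $\dim_{H}\leq\overline{\dim}_{B}$ produces $\dim_{H}\Gamma(f)\leq\min\{d+1-s,d/s\}=H(d,s)$. The main obstacles I anticipate are (i) controlling the tail in the first case without losing the $d$-set savings, and (ii) in the second case obtaining a H\"older-type oscillation bound with the full exponent $s$ — the naive Sobolev embedding $\mathbb{B}_{pq}^{s}(K)\hookrightarrow\mathcal{C}^{s-d/p}(K)$ would cost $d/p$ in smoothness and only yield $H(d,s-d/p)$, so a more refined argument (e.g., once more via the wavelet decomposition but adapted to the scale $2^{-J/s}$) is required.
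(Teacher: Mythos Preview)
Your overall architecture is right, but there is a genuine gap in the case $d\geq s$. You aim for a box-dimension bound at a single scale $2^{-J}$ and claim that the tail $R_{J}=\sum_{j\geq J}g_{j}$ ``yields the same order $\lesssim 2^{J(d+1-s)}$''. It does not. With the bounded-overlap observation you mention (each level-$j$ wavelet, $j\geq J$, meets $O(1)$ cubes $Q_{J}$), the H\"older/$d$-set count gives
\[
\sum_{Q_{J}\cap K\neq\emptyset}\sup_{Q_{J}}|g_{j}|\;\lesssim\;\sum_{m}|\lambda_{jm}^{l}|\;\lesssim\;2^{j(d-s)},
\]
and summing over $j\geq J$ produces $\sum_{j\geq J}2^{j(d-s)}$, which \emph{diverges} precisely when $d\geq s$. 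So a single-scale covering cannot absorb the tail, and you do not get $\overline{\dim}_{B}\Gamma(f)\leq d+1-s$ this way. This is not a technicality: it is the reason the paper does \emph{not} go through the upper box dimension in this regime. Instead it uses Lemma~\ref{lem:aggregation} in its genuinely multi-scale form to estimate $\mathcal{H}^{t}(\Gamma(h|_{K}))$ directly: the level-$j$ block is covered by cubes of its own side $2^{-j}$, contributing $2^{-j(t-1)}\sum_{m}|\lambda_{jm}|\lesssim 2^{-j(t-1-d+s)}$ to the $t$-Hausdorff sum, and this is summable in $j$ exactly for $t>d+1-s$. The residual beyond a large cutoff $j_{2}$ is then disposed of by the uniform convergence of the wavelet series (Remark~\ref{rem:unif conv series}(b)), choosing $j_{2}=j_{2}(j_{1})$ so that $\|{\sum_{j\geq j_{2}}g_{j}}\|_{\infty}\leq 2^{-j_{1}}$. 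Your write-up invokes Lemma~\ref{lem:aggregation} but then collapses its output to a single scale, which throws away exactly the mechanism that makes the tail summable.

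Two smaller remarks. First, the paper reduces to $p=1$ (via Proposition~\ref{pro:emb p2<p1}) before doing anything; this replaces your repeated H\"older step by the clean $\ell^{1}$ bound $\sum_{m}|\lambda_{jm}|\lesssim 2^{-j(s-d)}$ once and for all --- equivalent to what you do, but tidier. Second, in the case $d<s$ you are making life harder than necessary: a pointwise H\"older bound $\mathrm{osc}_{Q\cap K}g\lesssim(\mathrm{diam}\,Q)^{s}$ is not needed (and, as you note, is not delivered by the embedding into $\mathcal{C}^{s-d/p}$). The paper proves only the \emph{summed} estimate $\sum_{I_{\nu}}\mathrm{osc}_{I_{\nu}}h|_{K}\lesssim \nu\,2^{-\nu(s-d)/s}$ at scale $2^{-\nu/s}$, obtained by the same split $j<j_{\nu}$ versus $j\geq j_{\nu}$ and the $\ell^{1}$ coefficient bound; here the tail $\sum_{j\geq j_{\nu}}2^{j(d-s)}$ \emph{does} converge because $d<s$, and that is all that is required for $\overline{\dim}_{B}\Gamma(h|_{K})\leq d/s$.
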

\begin{proof}
We deal first with the case $0<s<1$.

We start by remarking that $\mathbb{B}_{pq}^{s}(K)\hookrightarrow\mathbb{B}_{1q}^{s}(K)\hookrightarrow\mathbb{B}_{1\infty}^{s}(K)$.
The first of these embeddings comes from Proposition \ref{pro:emb p2<p1};
the second one is a direct consequence of a well-known corresponding
embedding for Besov spaces on $\R^{n}$. Since $H(d,s)$ does not
depend on $p$ nor $q$, it is then enough to prove our Theorem for
the Besov spaces $\mathbb{B}_{1\infty}^{s}(K)$.

Given any real continuous function $f\in\mathbb{B}_{1\infty}^{s}(K)$,
let $g\in B_{1\infty}^{s+n-d}(\R^{n})$ be a continuous extension
of $f$ (there exists one, by Proposition \ref{pro:cont ext}). Because
$K$ is bounded, we can, without loss of generality, also assume that
$g$ is compactly supported (if necessary, we can always multiply
it by a suitable cut-off function). By Definition \ref{def:Besov-spaces},
we can write\begin{equation}
g=\sum_{j\in\No}\sum_{l=1}^{L_{j}}\sum_{m\in\Z^{n}}\lambda_{jm}^{l}2^{-jn/2}\Psi_{jm}^{l},\quad\mbox{ unconditional convergence in }\mathcal{S}'(\R^{n}),\label{eq:unc conv g}\end{equation}
where \begin{equation}
\sup_{{j\in\No\atop l=1,\ldots,L_{j}}}2^{j(s-d)}(\sum_{m\in\Z^{n}}|\lambda_{jm}^{l}|\,)<\infty.\label{eq:est sum lambda}\end{equation}
Notice also, in view of Remark \ref{rem:unif conv series}, that the
convergence in \eqref{eq:unc conv g} is also uniform and that, given
each $j\in\No$, only a finite number of coefficients $\lambda_{jm}^{l}$
are different from zero. 

Denoting \[
h:=\sum_{j\in\N}\sum_{l=1}^{L_{j}}\sum_{m\in\Z^{n}}\lambda_{jm}^{l}2^{-jn/2}\Psi_{jm}^{l}=\sum_{j\in\N}\sum_{l=1}^{L}\sum_{m\in\Zn}\lambda_{jm}^{l}2^{-n/2}\psi^{l}(2^{j-1}\cdot-m)\]
(which, clearly, is also a continuous function belonging to $B_{1\infty}^{s+n-d}(\R^{n})$),
we can write\[
f=\sum_{m\in\Z^{n}}\lambda_{0m}^{l}\Psi_{0m}^{l}|_{K}+h|_{K}.\]
Since the sum on $m$ is a Lipschitz function (as we have remarked
above, this sum actually has only a finite number of non-zero terms),
then $\Gamma(f)$ and $\Gamma(h|_{K})$ have the same Hausdorff dimension
(this follows from Remark \ref{rem:4 prop Hausd dim}(c)). Therefore,
our proof will be finished if we show that $\dim_{H}\Gamma(h|_{K})\leq H(d,s)$.
This is what we are going to prove next, assuming, for ease of writing,
that our $h$ is simply given by\begin{equation}
h:=\sum_{j\in\N}\sum_{m\in\Z^{n}}\lambda_{jm}\psi(2^{j-1}\cdot-m)=\sum_{j\in\N}\sum_{m\in\Zn}\lambda_{jm}\psi_{jm}.\label{eq:h}\end{equation}
So, we got rid of the finite summation in $l$ and of the unimportant
factor $2^{-n/2}$ and simplified the notation for the $\lambda$'s
and $\psi$'s (introducing also the simplification $\psi_{jm}:=\psi(2^{j-1}\cdot-m)$).
In this way we keep the essential features of the method without unecessarily
overcrowding the notation. If we were to consider the exact form of
$h$ in what follows, after some point we could indeed get rid of
the finite summation in $l$ without changing the estimates that are
obtained up to multiplicative positive constants.

In order to estimate $\dim_{H}\Gamma(h|_{K})$ from above, we are
going to estimate, also from above, the quantities $\mathcal{H}_{\sqrt{n+1}\,2^{-j_{1}+1}}^{t}(\Gamma(h|_{K}))$,
for $t\geq0$ and $j_{1}\in\N\setminus\{1\}$. 

We start by estimating\[
\mathcal{H}_{\sqrt{n+1}\,2^{-j_{1}+1}}^{t}(\Gamma(h_{0}|_{K}+\sum_{j=j_{1}}^{j_{2}-1}h_{j}|_{K}),\]
where $j_{2}>j_{1}$ (with $j_{2}\in\N$),\[
h_{0}:=\sum_{j=1}^{j_{1}-1}\sum_{m\in\Z^{n}}\lambda_{jm}\psi_{jm}\quad\mbox{ and }\quad h_{j}:=\sum_{m\in\Z^{n}}\lambda_{jm}\psi_{jm},\quad j=j_{1},\ldots,j_{2}-1,\]
by applying Lemma \ref{lem:aggregation} a finite number of times.
In what follows we use the notations $Q_{j}$ and $\mathcal{Q}_{j}$
with the same meaning as in that lemma (assuming further that the
coverings $\mathcal{Q}_{j}$ are minimal) and each time we start from
a covering of\[
\Gamma(h_{0}|_{K}+\sum_{j=j_{1}}^{k}h_{j}|_{K}),\quad k=j_{1}-1,j_{1},\ldots,j_{2}-2\]
(with the understanding that when $k=j_{1}-1$ we are starting from
a covering of $\Gamma(h_{0}|_{K})$) by $(n+1)$-cubes of side length
at least $2^{-k}$ and such that \emph{over} each $Q_{k}$ each point
between the \emph{levels} $m_{Q_{k}}:=\inf_{Q_{k}\cap K}(h_{0}|_{K}+\sum_{j=j_{1}}^{k}h_{j}|_{K})$
and $M_{Q_{k}}:=\sup_{Q_{k}\cap K}(h_{0}|_{K}+\sum_{j=j_{1}}^{k}h_{j}|_{K})$
belongs to one of those $(n+1)$-cubes. Therefore, by applying Lemma
\ref{lem:aggregation}, each time we conclude that the number of $(n+1)$-cubes
of side length $2^{-(k+1)}$ that one needs to add to the previous
covering, in order to get a covering of $\Gamma(h_{0}|_{K}+\sum_{j=j_{1}}^{k+1}h_{j}|_{K})$
by $(n+1)$-cubes of side length at least $2^{-(k+1)}$, and such
that \emph{over} each $Q_{k+1}$ each point between the \emph{levels}
$m_{Q_{k+1}}$ and $M_{Q_{l}}$ belongs to one of those $(n+1)$-cubes,
is bounded above by\[
\sum_{Q_{k+1}\in\mathcal{Q}_{k+1}}(2^{k+2}\sup_{y\in Q_{k+1}}|h_{k+1}|_{K}(y)|+2).\]
Hence, starting from a covering of $\Gamma(h_{0}|_{K})$ by $(n+1)$-cubes
of side length $2^{-(j_{1}-1)}$ built, with the help of the concept
of oscillation, \emph{over} each $Q_{j_{1}-1}\in\mathcal{Q}_{j_{1}-1}$,
whose number is bounded above by\[
\sum_{Q_{j_{1}-1}\in\mathcal{Q}_{j_{1}-1}}(2^{j_{1}-1}{\rm osc}_{Q_{j_{1}-1}}h_{0}|_{K}+2),\]
and applying Lemma \ref{lem:aggregation} repeatedly (a total number
of $j_{2}-j_{1}$ times), we get the following estimates (the constants
might depend on $t$), where we have also used Remark \ref{rem:dsetcover}
to estimate the number of elements of each $\mathcal{Q}_{j}$:\begin{eqnarray}
\lefteqn{\mathcal{H}_{\sqrt{n+1}\,2^{-j_{1}+1}}^{t}(\Gamma(h_{0}|_{K}+\sum_{j=j_{1}}^{j_{2}-1}h_{j}|_{K}))}\nonumber \\
 & \lesssim & 2^{-j_{1}t}\sum_{Q_{j_{1}-1}\in\mathcal{Q}_{j_{1}-1}}(2^{j_{1}-1}{\rm osc}_{Q_{j_{1}-1}}h_{0}|_{K}+2)\nonumber \\
 &  & +\sum_{j=j_{1}}^{j_{2}-1}2^{-jt}\sum_{Q_{j}\in\mathcal{Q}_{j}}(2^{j+1}\sup_{y\in Q_{j}}|h_{j}|_{K}(y)|+2)\nonumber \\
 & \lesssim & 2^{-j_{1}(t-d)}+2^{-j_{1}(t-1)}\sum_{Q_{j_{1}-1}\in\mathcal{Q}_{j_{1}-1}}{\rm osc}_{Q_{j_{1}-1}}h_{0}|_{K}\label{eq:hausdorff estimate}\\
 &  & +\sum_{j=j_{1}}^{j_{2}-1}2^{-j(t-d)}+\sum_{j=j_{1}}^{j_{2}-1}2^{-j(t-1)}\sum_{m\in\Z^{n}}|\lambda_{jm}|.\nonumber \end{eqnarray}
The estimate in the last term is possible because of the controlled
overlapping between the support of each $\psi_{jm}$ and the different
$Q_{j}$'s (i.e., each ${\rm supp}\,\psi_{jm}$ intersects only a finite
number of $Q_{j}$'s and this number can be bounded above by a constant
independent of $j$). 

Now, if one wants to estimate $\mathcal{H}_{\sqrt{n+1}\,2^{-j_{1}+1}}^{t}(\Gamma(h|_{K}))$
instead, notice that the Lemma \ref{lem:aggregation} can again be
used, and we just have to find out what is the contribution, coming
from $\sum_{j=j_{2}}^{\infty}h_{j}|_{K}$, that we need to add to
the right-hand side of \eqref{eq:hausdorff estimate}. Actually, this
could have been done at the same time we added the contribution of
the term $h_{j_{1}}|_{K}$, resulting in the extra term\begin{eqnarray*}
\lefteqn{2^{-j_{1}(t-1)}\sum_{Q_{j_{1}}\in\mathcal{Q}_{j_{1}}}\sup_{y\in Q_{j_{1}}}|\sum_{j=j_{2}}^{\infty}h_{j}|_{K}(y)|}\\
 & \lesssim & \,2^{-j_{1}(t-1)}2^{j_{1}d}\sup_{y\in\R^{n}}|\sum_{j=j_{2}}^{\infty}h_{j}|_{K}(y)|\,=:\,2^{-j_{1}(t-1)}2^{j_{1}d}C_{j_{2}}.\end{eqnarray*}
However, by the already mentioned uniform convergence of the sum defining
$h$, we have that $C_{j_{2}}$ tends to 0 as $j_{2}$ goes to infinity.
Therefore, by choosing $j_{2}$ large enough (in dependence of $j_{1}$)
so that $C_{j_{2}}2^{j_{1}}\leq1$, the last contribution is just
of the type $2^{-j_{1}(t-d)}$. This is the same as the first term
in \eqref{eq:hausdorff estimate}, and both can be absorbed by the
\emph{third} term in that expression. Hence, for such a choice of
$j_{2}$,\begin{eqnarray}
\lefteqn{\mathcal{H}_{\sqrt{n+1}\,2^{-j_{1}+1}}^{t}(\Gamma(h|_{K}))} &  & .\label{eq:hausdorff est}\\
 & \lesssim & \sum_{j=j_{1}}^{j_{2}-1}2^{-j(t-d)}+2^{-j_{1}(t-1)}\sum_{Q_{j_{1}-1}\in\mathcal{Q}_{j_{1}-1}}{\rm osc}_{Q_{j_{1}-1}}h_{0}|_{K}+\sum_{j=j_{1}}^{j_{2}-1}2^{-j(t-1)}\sum_{m\in\Z^{n}}|\lambda_{jm}|.\nonumber \end{eqnarray}

Now we estimate separately each one of the three distinguished terms
above (in all cases ignoring unimportant multiplicative constants):

The first one is dominated by $2^{-j_{1}(t-d)}$ under the assumption
$t>d$.

The second one is dominated by \begin{eqnarray}
\lefteqn{2^{-j_{1}(t-1)}\sum_{Q_{j_{1}-1}\in\mathcal{Q}_{j_{1}-1}}\sum_{j=1}^{j_{1}-1}\sum_{m}{\rm osc}_{Q_{j_{1}-1}}(\lambda_{jm}\psi_{jm}|_{K})}\nonumber \\
 & \leq & 2^{-j_{1}(t-1)}\sum_{j=1}^{j_{1}-1}\sum_{m}\sum_{Q_{j_{1}-1}\in\mathcal{Q}_{j_{1}-1}}|\lambda_{jm}|\,|\nabla\psi_{jm}(\xi_{j_{1}})|\,2^{-j_{1}},\label{eq:2nd term}\end{eqnarray}
where $\xi_{j_{1}}$ is chosen in $Q_{j_{1}-1}$ in accordance with
the mean value theorem (which we have just used above) and in $\sum_{m}$
the $m$ is restricted to the values for which ${\rm supp}\,\psi_{jm}$
intersects $K$. Such a number of $m$'s can clearly be estimated
from above by $2^{jd}$ (cf. Remark \ref{rem:dsetcover} and Definition
\ref{def:wavelets}). Next we remark that $|\nabla\psi_{jm}(\xi_{j_{1}})|\lesssim2^{j}$
and that in the inner sum in \eqref{eq:2nd term} we only need to
consider the $Q_{j_{1}-1}$'s which intersect ${\rm supp}\,\psi_{jm}$.
It is not difficult to see that such a number of $Q_{j_{1}-1}$'s
can be estimated from above by $2^{(j_{1}-j)d}$. Putting all this
together, and using also the estimate \eqref{eq:est sum lambda} and
the hypothesis $0<s<1$, the second term on the right-hand side of
\eqref{eq:hausdorff est} is dominated by\begin{eqnarray*}
\lefteqn{2^{-j_{1}(t-1)}\sum_{j=1}^{j_{1}-1}2^{j-j_{1}}2^{(j_{1}-j)d}\sum_{m\in\Z^{n}}|\lambda_{jm}|}\\
 & \lesssim & 2^{-j_{1}(t-d)}j_{1}\max_{1\leq j\leq j_{1}-1}2^{-j(s-1)}\:\approx\: j_{1}2^{-j_{1}(t-d+s-1)}.\end{eqnarray*}

Finally, again by using the estimate \eqref{eq:est sum lambda}, the
third term on the right-hand side of \eqref{eq:hausdorff est} can
be dominated by $2^{-j_{1}(t-d+s-1)}$ under the assumption $t>d-s+1$.

Altogether, and under the assumption $t>d-s+1$ (which, in particular,
implies that $t>d$, due to the hypothesis $0<s<1$), we have obtained
that\[
\mathcal{H}_{\sqrt{n+1}\,2^{-j_{1}+1}}^{t}(\Gamma(h|_{K}))\lesssim j_{1}2^{-j_{1}(t-d+s-1)},\]
from which it follows that\[
0\leq\mathcal{H}^{t}(\Gamma(h|_{K}))=\lim_{\delta\to0+}\mathcal{H}_{\delta}^{t}(\Gamma(h|_{K}))=\lim_{j_{1}\to\infty}\mathcal{H}_{\sqrt{n+1}\,2^{-j_{1}+1}}^{t}(\Gamma(h|_{K}))\leq0,\]
that is, $\mathcal{H}^{t}(\Gamma(h|_{K}))=0$. 

This being true for any $t>d+1-s$, we obtain, by definition, that
$\dim_{H}(\Gamma(h|_{K}))\leq d+1-s$, that is, $\dim_{H}(\Gamma(h|_{K}))\leq H(d,s)$
in the case $s\leq d$.

\medskip

We assume now that $s>d$ and show that also $\dim_{H}(\Gamma(h|_{K}))\leq H(d,s)$,
which means $\dim_{H}(\Gamma(h|_{K}))\leq d/s$ in this case (cf.
definition of $H(d,s)$ in \eqref{eq:H(d,s)}). As we shall see, it
will be enough to estimate $\overline{\dim}_{B}(\Gamma(h|_{K}))$
and use the relation $\dim_{H}(\Gamma(h|_{K}))\leq\overline{\dim}_{B}(\Gamma(h|_{K}))$.

Given $\nu\in\N$, we start by covering $K$ by $\approx2^{\nu d/s}$
$n$-cubes $I_{\nu}$ of side length $2^{-\nu/s}$ taken from a given
regular tessellation of $\Rn$ by cubes of sides parallel to the axes
and of such side length (we know, from Remark \ref{rem:dsetcover},
that this is possible). Using the \emph{representation} \eqref{eq:h}
for $h$, we can write\begin{eqnarray*}
\sum_{I_{\nu}}{\rm osc}_{I_{\nu}}h|_{K} & \leq & \sum_{I_{\nu}}\sum_{j=1}^{\infty}\sum_{m\in\Z^{n}}{\rm osc}_{I_{\nu}}(\lambda_{jm}\psi_{jm}|_{K})\\
 & = & \sum_{j=1}^{j_{\nu}-1}\sum_{m\in\Z^{n}}\sum_{I_{\nu}}{\rm osc}_{I_{\nu}}(\lambda_{jm}\psi_{jm}|_{K})+\sum_{j=j_{\nu}}^{\infty}\sum_{m\in\Z^{n}}\sum_{I_{\nu}}{\rm osc}_{I_{\nu}}(\lambda_{jm}\psi_{jm}|_{K})\\
 & =: & (I)+(II),\end{eqnarray*}
 where $j_{\nu}\in\N$ was chosen in such a way that $\nu/s\leq j_{\nu}\leq\nu/s+1$.
In particular, $2^{-j_{\nu}}\approx2^{-\nu/s}$. Reasoning now as
was done to control the second term on the right-hand side of \eqref{eq:hausdorff est},
we can dominate (I) by\begin{eqnarray*}
\lefteqn{\sum_{j=1}^{j_{\nu}-1}\sum_{m}\sum_{I_{\nu}}|\lambda_{jm}|\,|\nabla\psi_{jm}(\xi_{\nu})|\,2^{-j_{\nu}}}\\
 & \lesssim & \sum_{j=1}^{j_{\nu}-1}2^{j-j_{\nu}}2^{(j_{\nu}-j)d}\sum_{m\in\Z^{n}}|\lambda_{jm}|\:\lesssim\: j_{\nu}2^{-j_{\nu}(s-d)}\:\approx\:\nu2^{-\nu(s-d)/s}.\end{eqnarray*}
On the other hand, (II) can be dominated by \begin{eqnarray*}
\lefteqn{\sum_{j=j_{\nu}}^{\infty}\sum_{m\in\Z^{n}}\sum_{I_{\nu}}\sup_{I_{\nu}}|\lambda_{jm}\psi_{jm}|_{K}|}\\
 & \lesssim & \sum_{j=j_{\nu}}^{\infty}\sum_{m\in\Z^{n}}\sup_{\R^{n}}|\lambda_{jm}\psi_{jm}|_{K}|\:\lesssim\:\sum_{j=j_{\nu}}^{\infty}2^{-j(s-d)}\:\approx\:2^{-\nu(s-d)/s}.\end{eqnarray*}
The first estimate above is possible because of the controlled overlapping
between the support of each $\psi_{jm}$ and the different $I_{\nu}$'s
(i.e., each ${\rm supp}\,\psi_{jm}$ intersects only a finite number
of $I_{\nu}$'s and this number can be bounded above by a constant
independent of $j$ and $j_{\nu}$, due to the fact that here we have
$j\geq j_{\nu}$). The second estimate follows from \eqref{eq:est sum lambda}.
Summing up,\[
\sum_{I_{\nu}}{\rm osc}_{I_{\nu}}h|_{K}\lesssim\nu2^{-\nu(s-d)/s}.\]

Now consider a covering of the graph of $h|_{K}$ by $(n+1)$-cubes
of side length $2^{-\nu}$ taken from a corresponding regular tessellation
of $\R^{n+1}$ by dyadic cubes of sides parallel to the axes. Recalling
the assumption $0<s<1$, it is clear that the number of such cubes
does not exceed $\sum_{I_{\nu}}(2^{\nu}{\rm osc}_{I_{\nu}}h|_{K}+1)$.
Then, from the estimate above we see that this number is dominated
by\[
\sum_{I_{\nu}}1+2^{\nu}\sum_{I_{\nu}}{\rm osc}_{I_{\nu}}h|_{K}\:\lesssim\:2^{\nu d/s}+2^{\nu}\nu2^{-\nu(s-d)/s}\:\approx\:\nu2^{\nu d/s}.\]
Therefore, \[
\overline{\dim}_{B}(\Gamma(h|_{K}))\leq\lim_{\nu\to\infty}\left(\frac{\log_{2}\nu}{\nu}+\frac{\nu d/s}{\nu}\right)=\frac{d}{s}.\]

We recall that we have been assuming $0<s<1$. We deal now with the
case $s=1$.

Given a real continuous function $f$ in $\mathbb{B}_{pq}^{1}(K)$,
then we also have $f\in\mathbb{B}_{pq}^{s}(K)$ for any $s\in(0,1)$,
therefore $\dim_{H}\Gamma(f)\leq H(d,s)$ for any such $s$. Hence,
if $d\geq1$, also $d>s$ and $\dim_{H}\Gamma(f)\leq d+1-s$; letting
$s\to1^{-}$, we get $\dim_{H}\Gamma(f)\leq d=H(d,1)$. If $d<1$,
choose any $s\in[d,1)$, so that $s\geq d$ and, therefore, $\dim_{H}\Gamma(f)\leq d/s$;
again letting $s\to1^{-}$, it follows $\dim_{H}\Gamma(f)\leq d=H(d,1)$.\end{proof}
\begin{cor}
\label{cor:Corollary}Consider $0<d\leq n$ and $K$ a $d$-set. Let
$1\leq p\leq\infty$, $0<q\leq\infty$ and $0<s\leq1$. Then the estimates
of Proposition \ref{pro:lessthan} and Theorem \ref{thm:Besov up est}
are sharp. That is, $\sup_{f}\dim_{H}\Gamma(f)=H(d,s)$, where the
supremum is taken over all real continuous functions belonging either
to $\mathcal{C}^{s}(K)$ or to $\mathbb{B}_{pq}^{s}(K)$.\end{cor}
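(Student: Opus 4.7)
The plan is to combine the upper bounds already established in Proposition \ref{pro:lessthan} and Theorem \ref{thm:Besov up est} with the lower bounds supplied by the Weierstrass function $W_{s,\theta}$ of Example \ref{exa:holder} together with Theorem \ref{thm:weierstrass}. The inequality $\sup_f \dim_H \Gamma(f) \leq H(d,s)$ is immediate in both the H\"older and Besov settings, so the work lies in producing continuous witnesses in $\mathcal{C}^s(K)$ and in $\mathbb{B}_{pq}^s(K)$ whose graphs reach, or at least approach, $H(d,s)$.

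The H\"older case I would dispatch in one short paragraph. For $s \in (0,1)$, Example \ref{exa:holder} gives $W_{s,\theta}|_K \in \mathcal{C}^s(K)$ for every $\theta \in \Pi$, and Theorem \ref{thm:weierstrass} supplies a $\theta$ for which $\dim_H \Gamma(W_{s,\theta}|_K) = H(d,s)$; the supremum is actually attained. The borderline $s = 1$ is immediate from Lemma \ref{lem:minmax} applied to the zero function, whose graph $K \times \{0\}$ already has Hausdorff dimension $d = H(d,1)$.

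For the Besov case the plan is to approximate: for each $s' \in (s,1)$, I would verify that $W_{s',\theta}|_K \in \mathbb{B}_{pq}^s(K)$, then let $s' \to s^+$ and use the continuity of $H(d,\cdot)$. The trick is to route through the endpoint $p = \infty$, at which the exponent $(n-d)/p$ in Definition \ref{def:Besov on K} vanishes. Concretely, multiply $W_{s',\theta}$ by a compactly supported $C^\infty$ cut-off $\varphi$ equal to $1$ on a neighbourhood of $K$. Then $\varphi W_{s',\theta}$ is a compactly supported H\"older-$s'$ function, hence an element of $B_{\infty\infty}^{s'}(\R^n)$. The standard Besov embedding $B_{\infty\infty}^{s'}(\R^n) \hookrightarrow B_{\infty q}^{s}(\R^n)$ for $s' > s$ (a one-line check from the wavelet quasi-norm of Definition \ref{def:Besov-spaces}, amounting to the convergence of $\sum_j 2^{(s-s')jq}$) yields $\varphi W_{s',\theta} \in B_{\infty q}^s(\R^n)$. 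Since the trace on $K$ of any element of $B_{\infty q}^s(\R^n)$ is just pointwise restriction and $\varphi W_{s',\theta}$ coincides with $W_{s',\theta}$ on $K$, we get $W_{s',\theta}|_K \in \mathbb{B}_{\infty q}^s(K)$. Proposition \ref{pro:emb p2<p1} with $p_1 = \infty$ and $p_2 = p$ then upgrades this to membership in $\mathbb{B}_{pq}^s(K)$ for every $1 \leq p \leq \infty$. Theorem \ref{thm:weierstrass} delivers $\dim_H \Gamma(W_{s',\theta}|_K) = H(d,s')$ for almost every $\theta$, and letting $s' \to s^+$ gives the required lower bound. The case $s = 1$ for Besov is again trivial via the zero function.

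The main conceptual move is the decision to \emph{over-smooth slightly} ($s' > s$) and enter the trace framework through $p = \infty$, thereby avoiding the extra smoothness $(n-d)/p$ that would otherwise force a much more delicate extension argument (note that $s + (n-d)/p$ can well exceed $1$, ruling out a direct use of $W_{s,\theta}$ or its cut-off as a member of $B_{pq}^{s+(n-d)/p}(\R^n)$). I do not anticipate a genuinely hard step; the argument is modular and reuses the machinery of Theorem \ref{thm:weierstrass} and Proposition \ref{pro:emb p2<p1} essentially verbatim.
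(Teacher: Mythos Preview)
Your proposal is correct and follows essentially the same route as the paper: the H\"older case is handled identically (Weierstrass function for $s<1$, Lemma \ref{lem:minmax} for $s=1$), and in the Besov case you over-smooth to some $s'>s$, cut off $W_{s',\theta}$ to land in $B_{\infty\infty}^{s'}(\R^n)\hookrightarrow B_{\infty q}^{s}(\R^n)$, take the pointwise trace, push down to $\mathbb{B}_{pq}^{s}(K)$ via Proposition \ref{pro:emb p2<p1}, and let $s'\to s^{+}$. The only cosmetic differences are your use of a $C^\infty$ cut-off versus the paper's $\mathcal{C}^{1}$ cut-off and your explicit mention of the zero function for $s=1$, neither of which affects the argument.
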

\begin{proof}
Consider first the case of the spaces $\mathcal{C}^{s}(K)$. If $0<s<1$,
it follows from Example \ref{exa:holder} and Theorem \ref{thm:weierstrass}
that there exists a real continuous function in $\mathcal{C}^{s}(K)$
--- namely $W_{s,\theta}|_{K}$, for some $\theta$ --- the graph
of which has Hausdorff dimension exactly equal to $H(d,s)$. If $s=1$,
then $H(d,s)=H(d,1)=d$ and the result follows from Lemma \ref{lem:minmax}.

As to the spaces $\mathbb{B}_{pq}^{s}(K)$, for $s=1$ it follows
exactly as just pointed out, so we only need to consider $s\in(0,1)$: 

Let $\varepsilon\in(0,1-s)$, so that $0<s+\varepsilon<1$. Recall
--- see Example \ref{exa:holder} --- that $W_{s+\varepsilon,\theta}$
is H\"older continuous of exponent $s+\varepsilon$ on any bounded
subset of $\R^{n}$. Considering then an open bounded set $V\supset K$
and a function $\psi\in\mathcal{C}^{1}(\R^{n})$ with $\psi\equiv1$
on $K$ and $\psi\equiv0$ outside $V$, we have $\psi W_{s+\varepsilon,\theta}\in\mathcal{C}^{s+\varepsilon}(\R^{n})$.
Since the latter space is contained in $B_{\infty\infty}^{s+\varepsilon}(\R^{n})$
--- cf. \cite[pp. 4, 5 and 17]{Tri92} ---, and this, in turn, is
embedded in $B_{\infty q}^{s}(\R^{n})$, then, using Remark \ref{rem:continuoustrace},
Definition \ref{def:Besov on K} and Proposition \ref{pro:emb p2<p1},
we can write \[
f_{\varepsilon}:=W_{s+\varepsilon,\theta}|_{K}=(\psi W_{s+\varepsilon,\theta})|_{K}=tr_{K}(\psi W_{s+\varepsilon,\theta})\in\mathbb{B}_{\infty q}^{s}(K)\subset\mathbb{B}_{pq}^{s}(K).\]

Therefore, given any $\varepsilon\in(0,1-s)$ there exists a real
continuous function $f_{\varepsilon}\in\mathbb{B}_{pq}^{s}(K)$
the graph of which has Hausdorff dimension equal to $H(d,s+\varepsilon)$.
If $s<d$, restrict further the $\varepsilon$ to be also in $(0,d-s)$,
so that $H(d,s+\varepsilon)=d+1-s-\varepsilon$, hence $\sup_{f_{\varepsilon}}\dim_{H}\Gamma(f)=d+1-s=H(d,s)$.
If $s\geq d$, then also $s+\varepsilon\geq d$, so that $H(d,s+\varepsilon)=d/(s+\varepsilon)$,
hence $\sup_{f_{\varepsilon}}\dim_{H}\Gamma(f)=d/s=H(d,s)$ too. 
\end{proof}
\bibliographystyle{plain}
\bibliography{bibliography}

\begin{thebibliography}{10}

\bibitem{Bri-tese}
M.~Bricchi.
\newblock {\em Tailored function spaces and related h-sets}.
\newblock PhD thesis, Friedrich-Schiller-Universit{\"a}t Jena, 2001.

\bibitem{Abel05a}
A.~Carvalho.
\newblock Box dimension, oscillation and smoothness in function spaces.
\newblock {\em J. Funct. Spaces Appl.}, 3(3):287--320, 2005.

\bibitem{DeJa92}
A.~Deliu and B.~Jawerth.
\newblock Geometrical dimension versus smoothness.
\newblock {\em Constr. Approx.}, 8:211--222, 1992.

\bibitem{Fal90}
K.~J. Falconer.
\newblock {\em Fractal Geometry}.
\newblock John Wiley \& Sons, Chichester, 1990.

\bibitem{Hunt98}
B.~Hunt.
\newblock {The Hausdorff dimension of graphs of Weierstrass functions}.
\newblock {\em Proc. Amer. Math. Soc.}, 126:791--800, 1998.

\bibitem{JW84}
A.~Jonsson and H.~Wallin.
\newblock {\em Function \protect{S}paces on \protect{S}ubsets of
  $\mathbb{R}^n$}, volume~2 of {\em Math. Reports}.
\newblock Harwood Acad. Publ., 1984.

\bibitem{Kah93}
J.-P. Kahane.
\newblock {\em Some random series of functions}.
\newblock Cambridge Univ. Press, 2nd edition, 1985.

\bibitem{KaWo99}
A.~Kamont and B.~Wolnik.
\newblock {Wavelet Expansions and Fractal Dimensions}.
\newblock {\em Constructive Approximation}, 15(1):97--108, 1999.

\bibitem{Mou01a}
S.~Moura.
\newblock Function spaces of generalised smoothness.
\newblock {\em Dissertationes Math.}, 398:88 pp., 2001.

\bibitem{Mou01b}
S.~Moura.
\newblock {\em Function Spaces of Generalised Smoothness, Entropy Numbers,
  Applications}.
\newblock PhD thesis, University of Coimbra, 2001.

\bibitem{Roueff-tese}
F.~Roueff.
\newblock {\em Dimension de {H}ausdorff du graphe d'une fonction continue: une
  {\'e}tude analytiqueet statistique}.
\newblock PhD thesis, Ecole Nat. Sup{\'e}r. T{\'e}l{\'e}com., 2000.

\bibitem{Tri92}
H.~Triebel.
\newblock {\em Theory of Function Spaces II}.
\newblock Birkh{\"a}user, Basel, 1992.

\bibitem{Tri97}
H.~Triebel.
\newblock {\em Fractals and Spectra}.
\newblock Birkh{\"a}user, Basel, 1997.

\bibitem{Tri02}
H.~Triebel.
\newblock Fractal analysis, an approach via function spaces.
\newblock {\em Jahresbericht DMV}, 104(4):171--199, 2002.
\newblock English translation.

\bibitem{Tri06}
H.~Triebel.
\newblock {\em Theory of Function Spaces III}.
\newblock Birkh{\"a}user, Basel, 2006.

\end{thebibliography}

\end{document}